
\documentclass[leqno,11pt]{amsart}

\usepackage{verbatim}
\usepackage{amsmath,amscd,amsthm,amsxtra,amssymb}
\usepackage{epsfig,graphics,color,colortbl}
\usepackage{amssymb,latexsym}
\usepackage{mathrsfs}
\usepackage{soul}   
\usepackage[poly,all]{xy}
\usepackage{marginnote}
\usepackage[colorlinks=true, pdfstartview=FitV, linkcolor=blue,citecolor=blue,urlcolor=blue]{hyperref}

\usepackage[usenames,dvipsnames,svgnames,table]{xcolor}

\setlength{\textwidth}{15.5cm} \setlength{\textheight}{20cm}
\setlength{\oddsidemargin}{0.0cm} \setlength{\evensidemargin}{0.0cm}

\newtheorem{thm}{Theorem}[section]
\newtheorem{prop}[thm]{Proposition}

\newtheorem{lem}[thm]{Lemma}

\theoremstyle{definition}
\newtheorem{defn}[thm]{Definition}
\newtheorem{Ex}[thm]{Example}

\theoremstyle{remark}
\newtheorem{Rmk}[thm]{Remark}



\newenvironment{red}
{\relax\color{red}}
{\hspace*{.5ex}\relax}

\newcommand{\ber}{\begin{red}}
\newcommand{\er}{\end{red}}

\newcommand{\berE}{\begin{red}{}\marginnote{\fbox{\scshape\lowercase{E}}}{}}


\numberwithin{equation}{section}

\newcommand{\Z}{\mathbb{Z}}
\newcommand{\Q}{\mathbb{Q}}

\newcommand{\g}{\mathfrak{g}}
\newcommand{\h}{\mathfrak{h}}

\newcommand{\Hom}{\mathrm{Hom}}

\newcommand{\Ht}{{\rm ht}}


\newcommand{\rlQ}{\mathsf{Q}}   
\newcommand{\wlP}{\mathsf{P}}   
\newcommand{\weyl}{\mathsf{W}}  
\newcommand{\cmA}{\mathsf{A}}  
\newcommand{\tf}{\tilde{f}}  
\newcommand{\te}{\tilde{e}}  

\newcommand{\wt}{\mathrm{wt}} 		
\newcommand{\ep}{\varepsilon}  		
\newcommand{\ph}{\varphi}  		




\newcommand{\sg}{\mathfrak{S}}   








\newcommand{\lan}{\langle} 	
\newcommand{\ran}{\rangle}	


\newcommand{\SST}{\mathsf{SST}}					

\newcommand{\sh}{\mathsf{sh}}

\newcommand{\La}{\Lambda}
\newcommand{\la}{\lambda}
\newcommand{\al}{\alpha}

\newcommand{\cs}{\mathsf{s}} 			
\newcommand{\cw}{\mathsf{w}} 			
\newcommand{\ch}{\mathrm{ch}}					
\newcommand{\qdim}{\dim_q}					
\newcommand{\ev}{\mathsf{ev}}					
\newcommand{\cc}{\mathsf{c}}					
\newcommand{\CC}{\mathsf{C}}					
\newcommand{\PP}{\mathsf{P}}					
\newcommand{\cont}{\mathrm{cont}}					
\newcommand{\pr}{\mathsf{pr}}					
\newcommand{\id}{\mathrm{id}}					
\newcommand{\OO}{\mathcal{O}}					
\newcommand{\oo}{\mathfrak{e}}					
\newcommand{\lcm}{\mathrm{lcm}}					
\newcommand{\nz}{\mathsf{m}}					
\newcommand{\po}{\mathsf{d}}					

\begin{document}

\title[Crystals, semistandard tableaux and cyclic sieving phenomenon]
{Crystals, semistandard tableaux and cyclic sieving phenomenon}

\author[Young-Tak Oh]{Young-Tak Oh}
\thanks{The research of Y.-T. Oh was supported by the National Research Foundation of Korea (NRF) Grant funded by the Korean Government (NRF-2018R1D1A1B07051048).}
\address{Department of Mathematics, Sogang University, Seoul 121-742, Republic of Korea \& Korea Institute for Advanced Study, Seoul 02455, Republic of Korea}
\email{ytoh@sogang.ac.kr}

\author[Euiyong Park]{Euiyong Park}
\thanks{The research of E. Park was supported by the National Research Foundation of Korea (NRF) Grant funded by the Korean Government (NRF-2017R1A1A1A05001058).}
\address{Department of Mathematics, University of Seoul, Seoul 02504, Republic of Korea \& Korea Institute for Advanced Study, Seoul 02455, Republic of Korea}
\email{epark@uos.ac.kr}

\date{\today}
\subjclass[2010]{05E18, 05E05, 05E10}
\keywords{crystals,  semistandard tableaux,  cyclic sieving phenomenon, promotion}

\begin{abstract}
In this paper, we study a new cyclic sieving phenomenon on the set $\SST_n(\la)$ of semistandard Young tableaux with the cyclic action $\cc$ arising from its $U_q(\mathfrak{sl}_n)$-crystal structure. 
We prove that if $\la$ is a Young diagram with $\ell(\la) < n$ and $\gcd( n, |\la| )=1$, 
then the triple 
$
\left( \SST_n(\la),  \CC, q^{- \kappa(\la)} s_\la(1,q, \ldots, q^{n-1}) \right)
$ 
exhibits the cyclic sieving phenomenon, 
where $\CC$ is the cyclic group generated by $\cc$.  We further investigate a connection between $\cc$ and the promotion $\pr$ and show the bicyclic sieving phenomenon given by $\cc$ and $\pr^n$ for hook shape.  
\end{abstract}

\maketitle


\vskip 2em

\section*{Introduction}

The  {\it cyclic sieving phenomenon} was introduced in 2004 by Reiner-Stanton-White in \cite{RSW04}. 
Let $X$ be  a finite set, with an action of a cyclic group $C$ of order $n$, and $f(q) $ a polynomial in $q$ with nonnegative integer coefficients.
For $d \in \Z_{>0}$, let $\omega_d$ be a $d$th primitive root of the unity. 
We say that $(X, C, f(q))$ exhibits the  cyclic sieving phenomenon if, for all $c\in C$, we have 
$$
\# X^c = f(\omega_{o(c)}),
$$
where $o(c)$ is the order of $c$ and $X^c$ is the fixed point set under the action of $c$.
Note that this condition is equivalent to the following:
$$
f(q)\equiv \sum_{l=0}^{n-1}a_l q^l \pmod {q^n-1},
$$
where $a_l$ counts the number of $C$-orbits on $X$ for which the stablilizer-order divides $l$.
It has since then been extensively investigated for various combinatorial objects with an action of a finite cyclic group 
including words, multisets, permutations, non-crossing partitions, lattice paths, tableaux (see \cite{S11} for details).

In \cite{Sch72, Sch77}, Schuzenberger introduced  the {\it promotion} operator $\pr$ on (semi)standard Young tableaux, which takes one (semi)standard Young tableau to another via jeu de taquin slides.
 Afterwards, it has been studied widely and now has become one of the important objects in various research areas (see \cite{Stan09}).
It is known that it has a finite order, but in the best knowledge of the authors, 
its order is still mysterious except a few cases such as rectangular or staircase Young diagrams \cite{Hai92, PW11}.

Given a Young diagram $\la$, let  $\SST_n(\la)$ be the set of semistandard Young tableaux of shape $\la$ with entries in $\{1,2,\ldots, n\}$.
In \cite{Rho10},  Rhoades proved representation-theoretically that if $\la$ is of rectangular shape,   
the triple 
$$
\left(\SST_n(\la), \langle {\pr} \rangle, q^{-\kappa(\la)}s_\la(1,q, \ldots, q^{n-1}) \right) 
$$ 
exhibits  the cyclic sieving phenomenon, where 
$\kappa$ is the statistic on $\la=(\la_1, \la_2, \ldots)$ given by $\kappa(\la)=\sum_{i\ge 1}(i-1)\la_i$, 
and $s_\la(1,q, \ldots, q^{n-1})$ is the principal specialization of the \emph{Schur polynomial} $s_\la(x_1, x_2, \ldots, x_n)$.
This result, however, is no longer valid outside rectangular shape in general,  which says that, for a non-rectangular shape, 
another appropriate operator other than $\pr$ should be considered if we stick to the principal specialization on $\SST_n(\la)$. 
In \cite{FK14}, Rhoades' result is refined in the following manner. 
Let $\la=(a^b)$ and $\alpha=(\alpha_1, \ldots, \alpha_n)$ be a composition of $ab$, such that $\alpha$ is invariant under $l$th cyclic shift, then 
the triple   
 $$
\left( \SST_n(\la, \alpha),  \langle \pr^l \rangle, q^{\frac 12(a^2b-(\al_1^2+\al_2^2+ \cdots + \al_n^2))}K_{\la, \alpha}(q) \right)
$$ 
exhibits the cyclic sieving phenomenon, where $K_{\la, \alpha}(q)$ is a \emph{Kostka-Foulkes} polynomial associated with $ \la$ and $\alpha$.
Unfortunately, outside rectangular case, no results similar to this seem to be known yet.

In the present paper, we investigate the cyclic sieving phenomenon on $\SST_n(\la)$  with a cyclic action arising from its \emph{crystal} structure (see Section \ref{Sec: Crystals} for crystals).
For this purpose, we first notice that 
 $\pr = \sigma_1 \sigma_2 \cdots \sigma_{n-1}$, where 
$\sigma_i$ is the $i$th \emph{Bender–Knuth involution} acting on $\SST_n(\la)$. 
In general,  $\sigma_i$'s do not satisfy braid relations.
We then note that $\SST_n(\la)$ has a $U_q(\mathfrak{sl}_n)$-crystal structure, thus it is equipped with an action of the Weyl group.
Hence it would be very natural to consider the operator $\cc := \cs_1\cs_2\cdots \cs_{n-1}$ on $\SST_n(\la)$, where $\cs_i$ are simple reflections in the Weyl group. 
The operator $\cc$ shares several similarities with $\pr$, for instance, it is easy to check 
 that $ \wt(\cc (T)) = \wt(\pr (T))= s_1s_2\cdots s_{n-1} (\wt(T)) $.
One of the most favorable features of $\cc$, compared with $\pr$,  might be that 
its order is given by $n$ for arbitrary shape $\la,$  whereas the order of $\pr$  is very difficult to compute.

In the viewpoint of crystal theory, by using the operator $\cc$ instead of $\pr$, we observe a new cyclic sieving phenomenon on $\SST_n (\la)$ beyond rectangular shape.
More precisely, we prove that if $\la$ is a Young diagram with $\ell(\la) < n$ and $\gcd( n, |\la| )=1$, 
then the triple 
$$
\left( \SST_n(\la),  \CC, q^{- \kappa(\la)} s_\la(1,q, \ldots, q^{n-1}) \right)
$$ 
exhibits the cyclic sieving phenomenon, 
where $\CC$ is the cyclic group generated by $\cc$ (see Theorem \ref {Thm: main}). 
There are several examples for which our cyclic sieving phenomenon hold without the condition  $\gcd(n, |\la|) = 1$,
and Remark \ref{non relatively-prime case} shows an example for another cyclic sieving phenomenon with a specialization of $s_\la$ other than the  principal specialization.
It would be an interesting problem to give a characterization of Young diagrams $\la$ such that $\left(\SST_n(\la), c, f(q) \right)$  exhibits a cyclic sieving phenomenon, where $f(q)$ is a suitable specialization of  $s_\la$ (multiplied by a $q$-power). 
We also remark that the cyclic sieving phenomenon on the set of isolated vertices of a tensor product $B^{\otimes m}$ of a crystal $B$ with a different cyclic operator was studied in \cite{Wes16}.

Next, we turn to the connection between $\cc$ and the $\pr$. 
For an $n$-tuple $\alpha \in \Z_{\ge 0}^n$, let $\SST_n(\la, \alpha) := \{ T\in \SST_n(\la) \mid \cont(T) = \alpha  \}$.
We denote by
$\cont(\la)$ the set of all contents of $T$ where $T$ varies over $\SST_n(\la) $, and
 by $\cont^+(\la) $ the set of all $\al = (a_1, \ldots,  a_{n}) \in \cont(\la)$ such that
$ a_1 \ge a_2 \ge \cdots \ge a_{n}$.
Notice that $\SST_n(\la, \alpha)$ is invariant under $\pr^n$ for any $ \alpha \in \cont(\la)$.
For clarity, denote by $\pr^n|_{\alpha}$ the restriction of $\pr^n$ to $\SST_n(\la, \alpha)$.

We here deal with the case where $\la$ is of hook shape or two-column shape. 
In these special cases, we show that $\pr^n$ commutes with $\cs_i$'s, thus $\pr^n$ commutes with $\cc$.
We then show that the order of $\pr^n$ on $\SST_n(\la)$ equals $\lcm\{ \mathfrak{o}_{\la}(\alpha) \mid \alpha \in  \cont^+(\la) \}$,
where $\mathfrak{o}_{\la}(\alpha)$ denotes the order of  $\pr^n|_{\alpha}$ and 
$\lcm \{ k_1, k_2,  \ldots, k_t \}$ the least common multiple of $k_1, k_2, \ldots, k_t$.
We next consider the \emph{bicyclic sieving phenomenon} on $\SST_n(\la)$
in case where $\la$ is of hook shape with $(n, |\la|)=1$ (see \cite[Section 9]{S11} for the definition). 
Let $\la = (N-m, 1^m)$ with $\gcd(n, N)=1$, and consider the polynomial
$$
S_\la(q, t) = q^{-\kappa(\la)} \sum_{\mu \vdash N}  t^{ A_\mu}  K_{\la, \mu}(t^{ \frac{\po}{\po_\mu}}) \cdot m_\mu ( 1,q, q^2, \ldots, q^{n-1})
$$
given in Theorem \ref{Thm: bi-CSP in hook shape}.
Here $m_\mu(x_1, x_2, \ldots, x_n) $ is the \emph{monomial symmetric polynomial} assocoated to $\mu$, and 
$K_{\la, \mu}(t)$ is the \emph{Kostka-Foulkes polynomial} associated with $\la$ and $\mu$.
Note that the evaluation  $S_\la(q, t) $ at $t=1$ is equal to $ q^{- \kappa(\la)} s_\la(1,q, \ldots, q^{n-1})$. 
We show that the triple $  (\SST_n(\la), \CC\times \PP, S_\la(q, t) ) $ exhibits the bicyclic sieving phenomenon,
where  $\PP$  is the cyclic group generated by $\pr^n$
(see Theorem \ref{Thm: bi-CSP in hook shape}).

This paper is organized as follows:
In Section \ref{Sec: Crystals}, we review briefly the crystal theory. In Section \ref{Sec: SST}, we recall the combinatorics of Young tableaux. 
In Section \ref{Sec: CSP}, we study the action of $\cc$ on $\SST_n(\la)$ and prove the triple $\left( \SST_n(\la),  \CC, q^{- \kappa(\la)} s_\la(1,q, \ldots, q^{n-1}) \right)$ exhibits the cyclic sieving phenomenon. 
In Section \ref{Sec: c and pr}, we investigate a connection between $\cc$ and $\pr$ and show the bicyclic sieving phenomenon given by $\cc$ and $\pr^n$ for hook shape.

\vskip 2em

\section{Crystals}  \label{Sec: Crystals}

 Let $I$ be a finite index set. A square matrix $\cmA = (a_{ij})_{i,j\in I}$ is called a  \emph{generalized Cartan matrix} if it satisfies (i) $a_{ii}=2$ for $i \in I$  and $a_{ij} \in \Z_{\le 0}$ for $i \neq j$, (ii) $a_{ij} = 0$ if and only if $a_{ji} = 0$,
(iii) there exists a diagonal matrix $D={\rm diag}(\mathsf d_i \mid i \in I)$ such that $D\cmA$ is symmetric.
A {\it Cartan datum} $ (\cmA,\wlP,\Pi,\wlP^\vee,\Pi^\vee) $
consists of
\begin{enumerate}
\item a generalized Cartan matrix $\cmA$,
\item a free abelian group $\wlP$, called the {\em weight lattice},
\item $\Pi = \{ \alpha_i \mid i\in I \} \subset \wlP$,
called the set of {\em simple roots},
\item $\wlP^{\vee}=
\Hom_{\Z}( \wlP, \Z )$, called the \emph{coweight lattice},
\item $\Pi^{\vee} =\{ h_i \in \wlP^\vee \mid i\in I\}$, called the set of {\em simple coroots},
\end{enumerate}
which satisfy
\begin{enumerate}
\item $\lan h_i, \alpha_j \ran = a_{ij}$ for $i,j \in I$,
\item $\Pi$ is linearly independent over $\Q$,
\item for each $i\in I$, there exists $\varpi_i \in \wlP$, called the \emph{fundamental weight}, such that $\lan h_j,\varpi_i \ran =\delta_{j,i}$ for all $j \in I$.
\end{enumerate}

We set  $ \rlQ := \bigoplus_{i \in I} \Z \alpha_i$, called the \emph{root lattice}, and  $\rlQ^+ :=\sum_{i\in I} \Z_{\ge0}\alpha_i$. We fix a nondegenerate symmetric bilinear form $( \cdot \, , \cdot )$ on $\h^* :=\Q \otimes_\Z \wlP$ satisfying
\begin{equation*}
(\alpha_i,\alpha_j)=\mathsf d_i a_{ij} \quad (i,j \in I),\quad \text{and } \quad  \lan h_i,  \lambda\ran = \dfrac{2 (\alpha_i,\lambda)}{(\alpha_i,\alpha_i)} \quad (\lambda \in \mathfrak h^*, \ i \in I).
\end{equation*}
Let us denote by $\wlP^+: =\{ \lambda \in \wlP \mid \lan h_i, \lambda\ran \ge 0 \ \text{for all }  \ i \in I \}$ the set of \emph{dominant integral weights}, and
define $\Ht (\beta) :=\sum_{i \in I} k_i $  for $\beta=\sum_{i \in I} k_i \alpha_i \in \rlQ^+$.
Let $\weyl$ be the \emph{Weyl group} associated with $\cmA$, which is generated by
$$
s_i(\la) = \la - \langle h_i, \la \rangle \alpha_i\qquad \text{ for $i\in I$ and } \la\in \wlP.
$$

Let $U_q(\g)$ be the \emph{quantum group} associated with the Cartan datum $(\cmA, \wlP,\wlP^\vee \Pi, \Pi^{\vee})$, which
is generated by $f_i$, $e_i$ $(i\in I)$ and $q^h$ $(h\in \wlP)$ with certain defining relations (see \cite[Chater 3]{HK02} for details).
The notion of \emph{crystals} was introduced in \cite{Kas90, Kas91, Kas93}. We refer the reader to \cite{BS17, HK02} for details.
\begin{defn}
A \emph{crystal} associated with $(\cmA, \wlP,\wlP^\vee \Pi, \Pi^{\vee})$ is a set $B$ together with the maps $\wt : B \rightarrow \wlP$, $\te_i, \tf_i: B \rightarrow B \cup \{0\}$, and $\ep_i, \ph_i : B \rightarrow \Z \cup \{ - \infty \}$ ($i\in I$)
satisfying the following properties:
\begin{enumerate}
\item $\ph_i(b) = \ep_i(b) + \langle h_i, \wt(b) \rangle$ for all $i\in I$,
\item $\wt(\te_i b) = \wt(b)+\alpha_i$ if $\te_i b\in B$,
\item  $\wt(\tf_i b) = \wt(b)-\alpha_i$ if $\tf_i b\in B$,
\item $\ep_i( \te_i b ) =  \ep_i (b)-1$, $\ph_i( \te_i b ) =  \ph_i (b)+1$ if $\te_i b\in B$,
\item $\ep_i( \tf_i b ) =  \ep_i (b)+1$, $\ph_i( \tf_i b ) =  \ph_i (b)-1$ if $\tf_i b\in B$,
\item $\tf_i b = b'$ if and only if $ b = \te_i b'$ for $b,b'\in B$ and $i\in I$,
\item if $\ph_i(b) = -\infty$ for $b\in B$, then $\te_i b = \tf_i = 0$.
\end{enumerate}
\end{defn}
For a crystal $B$, we set $B_\xi := \{ b\in B \mid \wt(b) = \xi \}$ so that $B = \sqcup_{\xi \in \wlP} B_\xi$.
Let
$$
\wt(B) := \{ \xi \in \wlP \mid B_\xi \ne \emptyset  \}.
$$
For a dominant integral weight $\La \in \wlP^+$, we denote by $B(\La)$ the crystal of the irreducible highest weight $U_q(\g)$-module $V_q(\La)$ with highest weight $\La$.
For $i\in I$, we define the bijection $\cs_i$ on $B(\La)$ by
\begin{align} \label{Eq: def of si}
\cs_i ( b) =
\left\{
\begin{array}{ll}
 \tf_i^{ \langle h_i, \wt(b) \rangle } b  & \text{ if }  \langle h_i, \wt(b) \rangle \ge 0,\\
\te_i^{ - \langle h_i, \wt(b) \rangle }b & \text{ if } \langle h_i, \wt(b) \rangle < 0.
\end{array}
\right.
\end{align}
Then the Weyl group $\weyl$ acts on the crystal $B(\La)$ in which the simple reflection $s_i$ acts via $\cs_i$ for $i\in I$ (see \cite[Chapter 2.5]{BS17} for details).
Note that
\begin{align} \label{Eq: wt and si}
\wt( \cs_i (b) ) = s_i (\wt(b)) \qquad \text{ for $i\in I$ and $b\in B(\La)$.}
\end{align}

The \emph{character} $\ch B(\La)$ of $B(\La)$ is defined by
$$
\ch B(\La) := \sum_{\xi \in \wt(B(\La))}  | B(\La)_\xi | e^\xi,
$$
where $| B(\La)_\xi |$ is the number of elements of $B(\La)_\xi$, and
$e^\xi$ are formal basis elements of the group algebra $\Q[\wlP]$ with the multiplication given by $e^\xi e^{\xi'} = e^{\xi+\xi'}$.
The \emph{$q$-dimension} of $B(\La)$ is given by
$$
\qdim B(\La) = \sum_{\xi \in \wt(B(\La))}  | B(\La)_\xi | q^{ \ev( \La - \xi )},
$$
where $\ev:\rlQ \to \mathbb Z$ is the map defined as follows:
\begin{align}
\ev(\beta) := \sum_{i\in I} b_i \qquad  \text{ for }\beta = \sum_{i\in I} b_i \al_i \in \rlQ .
\end{align}

We now assume that $I = \{ 1,2, \ldots, r \}$ and  the Cartan matrix $\cmA$ is of \emph{finite type}. Note that the crystal $B(\La)$ is a finite set.
We define the bijection $\cc$ on $B(\La)$ as follows:
\begin{align} \label{Eq: def of c}
\cc := \cs_1\cs_2\cdots \cs_r.
\end{align}
Since $\cs_i$'s act on $B(\Lambda)$ as simple reflections of the Weyl group $\weyl$, $\cc$ can be viewed as a \emph{Coxeter element} of $\weyl$.
Let $\CC := \langle \cc \rangle$ be the cyclic subgroup of $\weyl$ generated by $\cc$, and $h$ the \emph{Coxeter number} of  $\weyl$.
\begin{lem} \label{Lem: C n crystal}
The cyclic group $\CC$ has order $h$ and acts on the crystal $B(\Lambda)$.
\end{lem}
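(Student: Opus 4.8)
The plan is to deduce both claims from the classical theory of finite Coxeter groups, the relevant facts about crystals having already been recalled in the preceding material. For the action: by \cite[Chapter 2.5]{BS17} the bijections $\cs_i$ of \eqref{Eq: def of si} satisfy the defining relations of $\weyl$, so the assignment $s_i \mapsto \cs_i$ extends to a group homomorphism $\rho\colon \weyl \to \mathrm{Sym}(B(\Lambda))$ giving the Weyl group action on $B(\Lambda)$. By \eqref{Eq: def of c}, the bijection $\cc$ is precisely $\rho(\gamma)$ for $\gamma := s_1 s_2 \cdots s_r \in \weyl$; hence $\CC = \langle\cc\rangle$ is the image under $\rho$ of the cyclic subgroup $\langle\gamma\rangle$ of $\weyl$, and it acts on $B(\Lambda)$ by restriction of $\rho$. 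This settles the assertion that $\CC$ acts on $B(\Lambda)$.

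For the statement about the order, I would observe that $\gamma = s_1 s_2 \cdots s_r$ is a \emph{Coxeter element} of $\weyl$, being a product in which each simple reflection appears exactly once. Since $\cmA$ is of finite type, the Dynkin diagram of $\weyl$ is a forest, and by the classical theory all Coxeter elements of $\weyl$ are conjugate, their common order being, by definition, the Coxeter number $h$ (see, e.g., Bourbaki's treatise on Lie groups and Lie algebras, Chapter V, or Humphreys, \emph{Reflection Groups and Coxeter Groups}, Sections 3.16--3.19). Therefore $|\langle\gamma\rangle| = h$, and since $\CC$ is by definition the subgroup $\langle\gamma\rangle \le \weyl$, we conclude $|\CC| = h$.

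I expect no genuine obstacle here; the only point needing care is bookkeeping with conventions. One must use that the $\weyl$-action on $B(\Lambda)$ is a \emph{left} action, so that the composite $\cs_1 \circ \cdots \circ \cs_r$ corresponds to $s_1 s_2 \cdots s_r$ rather than to the reversed word — and in any case the reversed word is again a Coxeter element, so the order equals $h$ regardless. Finally, as a remark tying this to the later applications: when $\cmA$ is irreducible and $\Lambda \neq 0$ the homomorphism $\rho$ is faithful, since a $w \in \weyl$ fixing $B(\Lambda)$ pointwise must fix every weight in $\wt(B(\Lambda))$, whose $\Q$-span is a nonzero $\weyl$-submodule of the irreducible $\weyl$-module $\h^*$, hence all of $\h^*$, forcing $w = \id$; in that case $\cc$ has order exactly $h$ also as a permutation of $B(\Lambda)$, which is the situation used in the sequel (where $\cmA$ is of type $A_{n-1}$, so $h = n$).
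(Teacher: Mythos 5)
Your argument is correct and is exactly the route the paper intends: it gives no explicit proof of this lemma, relying on the cited fact that $s_i\mapsto\cs_i$ defines a $\weyl$-action on $B(\Lambda)$ and on the classical fact that a Coxeter element of a finite Weyl group has order $h$ (indeed the paper defines $\CC$ as a subgroup of $\weyl$, so the order claim is immediate). Your additional faithfulness remark (needed if one insists on reading $\CC$ as a group of permutations of $B(\Lambda)$, and requiring $\Lambda\neq 0$) is a correct and worthwhile refinement of a point the paper glosses over.
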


\vskip 2em

\section{Semistandard tableaux} \label{Sec: SST}

For a partition $\la = (\la_1 \ge \la_2 \ge \ldots \ge \la_l > 0)$, 
the \textit{length $\ell(\la)$} of $\la$ is defined to be the number of positive parts of $\la$ and the \textit{size $|\la|$} of $\la$ the sum of all parts, that is, $\ell(\la) = l$ and $|\la| = \Sigma \la_i$.
Throughout this paper, we will confuse $\la$ with its Young diagram drawn in English convention, more precisely, an array of boxes in which the $i$th row has $\lambda_i$ boxes from top to bottom.  
The \emph{conjugate} $\la'$ of $\la$ denotes the Young diagram obtained from $\la$ by flipping the diagonal.

A {\it semistandard tableau} $T$ of shape $\la$ with entries bounded by $n$ is a filling of boxes of $\la$ with entries in $\{ 1, 2,\ldots, n\}$ such that
\begin{enumerate}
\item the entries in each row are weakly increasing from left to right, and 
\item  the entries in each column are strictly increasing from top to bottom.
\end{enumerate}
Let $\sh(T)$ denote the shape of a semistandard tableau $T$  
and $\SST_n(\la)$ the set of all semistandard tableaux of shape $\la$ with entries bounded by $n$.
We say that $\mathbf{b} = (p,q) \in T$ if $\mathbf{b}$ is a box of $T$ at the $p$th row and the $q$th column, and denote by $T(\mathbf{b})$ the entry of the box $\mathbf{b}$.
For example,  the following is a semistandard tableau of shape  $\la = (8,5,2)$ with entries bounded by $5$:
$$
\xy
(0,12)*{};(48,12)*{} **\dir{-};
(0,6)*{};(48,6)*{} **\dir{-};
(0,0)*{};(30,0)*{} **\dir{-};
(0,-6)*{};(12,-6)*{} **\dir{-};
(0,12)*{};(0,-6)*{} **\dir{-};
(6,12)*{};(6,-6)*{} **\dir{-};
(12,12)*{};(12,-6)*{} **\dir{-};
(18,12)*{};(18,0)*{} **\dir{-};
(24,12)*{};(24,0)*{} **\dir{-};
(30,12)*{};(30,0)*{} **\dir{-};
(36,12)*{};(36,6)*{} **\dir{-};
(42,12)*{};(42,6)*{} **\dir{-};
(48,12)*{};(48,6)*{} **\dir{-};
(3,9)*{1}; (9,9)*{1}; (15,9)*{2}; (21,9)*{2}; (27,9)*{2}; (33,9)*{4}; (39,9)*{5};(45,9)*{5};
(3,3)*{2}; (9,3)*{3}; (15,3)*{3}; (21,3)*{3}; (27,3)*{5};
(3,-3)*{3}; (9,-3)*{4};
\endxy
$$
\vskip 0.5em

For $T \in \SST_n(\la)$, the \emph{content} $\cont(T)$ of $T$ is defined to be the $n$-tuple $(c_1, \ldots, c_n)$, where $c_k$ is the number of occurrences of $k$ in $T$. Setting 
$x^T := x_1^{c_1} \cdots x_n^{c_n}$, we define
the \emph{Schur polynomial}  
$$
s_\la(x_1, \ldots, x_n) := \sum_{T\in \SST_n(\la)} x^T.
$$

Next, we describe the \emph{promotion operator} $\pr$ on $\SST_n(\la)$. Let $T\in \SST_n(\la)$. If $T$ does not contain entries equal to $n$, then $\pr(T)$ is defined to be the tableau obtained from $T$ by increasing all the entries by 1.
Otherwise, replace every entry equal to $n$ with a dot, then by using jeu-de-taquin, slide the dots to the northwest corner from left to right and top to bottom.
Finally, replace all dots by 1's and increase all other entries by 1 to obtain $\pr(T)$.

\begin{Ex} Let $n=4$ and $\la = (3,3,1)$. The following is an illustration of the promotion on a tableaux $T \in \SST_4(\la)$. 
$$
\xymatrix{
T = { \small  \begin{tabular}{|c|c|c|}
     \hline
       1 & 1 &2 \\
\hline
      3 & 3 &4 \\
\hline
      4  \\
\cline{1-1}
   \end{tabular}
}
\ar[r] & 
{ \small  \begin{tabular}{|c|c|c|}
     \hline
       1 & 1 &2 \\
\hline
      3 & 3 & $\bullet$ \\
\hline
      $\bullet$  \\
\cline{1-1}
   \end{tabular}
}
\ar[r] & 
{ \small  \begin{tabular}{|c|c|c|}
     \hline
        $\bullet$ & 1 &2 \\
\hline
      1 & 3 & $\bullet$ \\
\hline
     3  \\
\cline{1-1}
   \end{tabular}
}
\ar[r] & 
{ \small  \begin{tabular}{|c|c|c|}
     \hline
        $\bullet$ &  $\bullet$ &2 \\
\hline
      1 & 1 & 3 \\
\hline
      3  \\
\cline{1-1}
   \end{tabular}
}
\ar[r] & 
{ \small  \begin{tabular}{|c|c|c|}
     \hline
        1 &  1 &3 \\
\hline
      2 & 2 & 4 \\
\hline
      4  \\
\cline{1-1}
   \end{tabular}
}=\pr(T)
}
$$
\end{Ex}

From now on, we assume that the Cartan matrix $\cmA$ is of type $A_{n-1}$, i.e., $U_q(\g) = U_q(\mathfrak{sl}_n) $, with $I = \{ 1, 2, \ldots, n-1 \}$.
 For $k =1, \ldots, n$, we set $\epsilon_k := (0, \ldots, 1, \ldots, 0) \in \Q^{n}$ to be the unit vector with the 1 in the $k$th position.
For $i\in I$, we set
$$
\alpha_i := \epsilon_i - \epsilon_{i+1} \quad \text{ and } \quad  \varpi_i := \sum_{k=1}^i \epsilon_i.
$$
Then we identify the weight lattice $\wlP$ with the $n-1$-dimensional subspace of $\Q^n$ orthogonal to the vector $\epsilon_1 + \cdots + \epsilon_n$.
Note that the bilinear form $( \cdot \, , \cdot )$ corresponds to the usual inner product and $s_i(\epsilon_j) = \epsilon_{s_i(j)}$ for $i\in I$, where 
the subscript $s_i$ denotes the simple transposition $(i,i+1)$ in the symmetric group $\sg_n$.

Let $\la = (\la_1\ge \cdots \ge \la_\ell > 0)$ be a Young diagram with $\ell(\la) < n$. Letting $\la' = (\la_1', \la_2', \ldots, \la_t')$, we set
$
\wt(\la) := \sum_{k=1}^t \varpi_{\la_k'} \in \wlP^+.
$
It is well-known that $\SST_n(\la)$ admits a $U_q(\mathfrak{sl}_n)$-crystal structure and
$$
 \SST_n(\la) \simeq B(\wt(\la))
$$
 as a $U_q(\mathfrak{sl}_n)$-crystal. We refer the reader to  \cite[Chapter 3]{BS17} and \cite[Chapter 7]{HK02} for details. Note that
$\wt(T) = c_1 \epsilon_1 + \cdots + c_n \epsilon_n $ for $T \in \SST_n(\la)$,  where $\cont(T) = (c_1, \ldots, c_n)$.
We remark that the \emph{principal specialization} of $s_\la(x_1, \ldots, x_n)$ is equal to the $q$-dimension of $B(\wt(\la))$ up to a power of $q$, more precisely,
\begin{align} \label{Eq: schur and char}
s_\la(1,q, \ldots, q^{n-1}) = q^{\kappa(\la)} \qdim B(\wt(\la)), \quad \text{where $\kappa(\lambda) = \sum_{k=1}^\ell (k-1) \la_k$.}
\end{align}
Since $\SST_n(\la)$ is a $U_q(\mathfrak{sl}_n)$-crystal, the operator $\cc$ defined as in $\eqref{Eq: def of c}$ acts on $\SST_n(\la)$. The lemma below follows from Lemma \ref{Lem: C n crystal} immediately.
\begin{lem}
The cyclic group $\CC$ has order $n$ and acts on the $U_q(\mathfrak{sl}_n)$-crystal $\SST_n(\la)$.
\end{lem}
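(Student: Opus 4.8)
The statement to be proved is the following immediate corollary of Lemma~\ref{Lem: C n crystal}: since $\SST_n(\la)\simeq B(\wt(\la))$ as a $U_q(\mathfrak{sl}_n)$-crystal when $\ell(\la)<n$, the cyclic group $\CC=\langle\cc\rangle$ acts on $\SST_n(\la)$, and it has order $n$.

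The plan is to deduce everything from the $A_{n-1}$ specialization of the general Lemma~\ref{Lem: C n crystal}. First I would observe that, by the discussion preceding the lemma, the condition $\ell(\la)<n$ guarantees that $\wt(\la)=\sum_{k=1}^{t}\varpi_{\la_k'}$ is a genuine dominant integral weight in $\wlP^+$ for $U_q(\mathfrak{sl}_n)$ (here one needs $\la_1'=\ell(\la)\le n-1$ so that every fundamental weight $\varpi_{\la_k'}$ that occurs is indexed by an element of $I=\{1,\dots,n-1\}$), and that the cited crystal isomorphism $\SST_n(\la)\simeq B(\wt(\la))$ transports the operators $\te_i,\tf_i$, hence the maps $\cs_i$ of \eqref{Eq: def of si}, and hence $\cc=\cs_1\cs_2\cdots\cs_{n-1}$ of \eqref{Eq: def of c}, from $B(\wt(\la))$ to $\SST_n(\la)$. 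Therefore the action of $\CC$ on $B(\wt(\la))$ provided by Lemma~\ref{Lem: C n crystal} is carried over to an action of $\CC$ on $\SST_n(\la)$, and the order of $\CC$ is unchanged.

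It then remains only to identify the Coxeter number $h$ appearing in Lemma~\ref{Lem: C n crystal} in the present case. Since $\cmA$ is of type $A_{n-1}$, the Weyl group $\weyl$ is the symmetric group $\sg_n$ acting on $\Q^n$ by permuting the coordinates $\epsilon_1,\dots,\epsilon_n$, the simple reflection $s_i$ being the transposition $(i,i+1)$; this is exactly the identification recorded right before the statement. The Coxeter element $\cc=\cs_1\cs_2\cdots\cs_{n-1}$ thus acts as the product of simple transpositions $(1\,2)(2\,3)\cdots(n-1\,n)$, which is the $n$-cycle $(1\,2\,\cdots\,n)$ (or its inverse, depending on convention); in either case it has order $n$, so $h=n$. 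Plugging $h=n$ into Lemma~\ref{Lem: C n crystal} gives that $\CC$ has order $n$, completing the proof.

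There is essentially no obstacle here: the lemma is a direct corollary and the only thing requiring a word of justification is that $\wt(\la)$ lies in $\wlP^+$, which is precisely why the hypothesis $\ell(\la)<n$ is imposed, together with the elementary fact that a Coxeter element of $\sg_n$ is an $n$-cycle. For completeness one could also remark, as the excerpt already does via $s_1s_2\cdots s_{n-1}(\wt(T))=\wt(\cc(T))$ and \eqref{Eq: wt and si}, that the weight of a tableau is cycled by $\cc$ in the same way the promotion operator cycles it, which gives an independent sanity check that $\cc$ indeed has order dividing $n$; lower-bounding the order by exhibiting a single tableau whose $\CC$-orbit has size $n$ (for instance a highest-weight-type filling whose content is a permutation of a fixed composition with $n$ distinct cyclic shifts) would make the argument self-contained without invoking Lemma~\ref{Lem: C n crystal}, but invoking the lemma is the cleanest route.
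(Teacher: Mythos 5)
Your proposal is correct and follows exactly the paper's route: the paper states that this lemma ``follows from Lemma~\ref{Lem: C n crystal} immediately,'' which is precisely your argument of transporting the $\weyl$-action through the isomorphism $\SST_n(\la)\simeq B(\wt(\la))$ and identifying the Coxeter number of type $A_{n-1}$ as $h=n$. The additional remarks (why $\ell(\la)<n$ is needed for $\wt(\la)\in\wlP^+$, and the $n$-cycle computation) are correct elaborations of details the paper leaves implicit.
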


\vskip 2em

\section{Cyclic sieving phenomenon} \label{Sec: CSP}

As before, assume that the Cartan matrix $\cmA$ is of type $A_{n-1}$.
Let $c := s_1 s_2\cdots s_{n-1} \in \sg_n $. Note that $\sg_n$ acts on the weight lattice $\wlP$.
In addition, 
from the definition of $\pr$ and $\cc$ it follows that
\begin{align}
\wt( \cc (T))  = \wt( \pr (T) )  = c ( \wt(T) ) \qquad \text{  for $T \in \SST_n(\la)$.}
\end{align}

\begin{lem} \label{Lem: cc and weight}  \
\begin{enumerate}
\item For $\beta \in \rlQ$, we have $$\ev(c (\beta)) \equiv  \ev(\beta)  \pmod n.$$
\item Let $ \La \in \wlP^+$ and $N = \left( \La, \alpha_1 + 2 \alpha_2 + \cdots + (n-1) \alpha_{n-1}  \right) $. Then
$$  \ev( c (\La)) \equiv \ev(\La) - N  \pmod n.
$$
\end{enumerate}
\end{lem}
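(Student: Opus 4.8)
The plan is to make the Coxeter element $c = s_1 s_2 \cdots s_{n-1} \in \mathfrak{S}_n$ completely explicit as a permutation and then reduce both statements to short computations in the root and weight lattices. Tracing the image of each index under the composite $s_1 \circ s_2 \circ \cdots \circ s_{n-1}$ (with $s_i = (i,i+1)$, composed right to left) shows that $c$ is the long cycle $(1\,2\,\cdots\,n)$; equivalently, on $\wlP$ one has $c(\epsilon_k) = \epsilon_{k+1}$ for $1 \le k \le n-1$ and $c(\epsilon_n) = \epsilon_1$. (This is consistent with the known fact that $\pr$ rotates contents cyclically, which underlies $\wt(\cc(T)) = \wt(\pr(T)) = c(\wt(T))$.) From this I read off the action on simple roots:
\[
c(\alpha_i) = \epsilon_{i+1} - \epsilon_{i+2} = \alpha_{i+1} \quad (1 \le i \le n-2), \qquad c(\alpha_{n-1}) = \epsilon_n - \epsilon_1 = -(\alpha_1 + \alpha_2 + \cdots + \alpha_{n-1}).
\]

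For part (1) I would expand $\beta = \sum_{i=1}^{n-1} b_i \alpha_i$, apply $c$ term by term using the above, and then apply $\ev$, recalling $\ev(\alpha_i) = 1$ for all $i$. The terms with $i \le n-2$ contribute $\sum_{i=1}^{n-2} b_i$, while the $\alpha_{n-1}$-term contributes $-(n-1)b_{n-1}$, so $\ev(c(\beta)) = \ev(\beta) - n\,b_{n-1} \equiv \ev(\beta) \pmod n$.

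For part (2) I would first note that $c \in \weyl$ forces $c(\Lambda) - \Lambda \in \rlQ$, so $\ev(c(\Lambda)) - \ev(\Lambda)$ is a well-defined integer (extending $\ev$ linearly to $\wlP$, or simply reading the left-hand side as $\ev(c(\Lambda) - \Lambda)$). Pick a lift $\sum_{k=1}^n a_k\epsilon_k \in \mathbb{Q}^n$ of $\Lambda$, unique up to adding multiples of $\epsilon_1 + \cdots + \epsilon_n$, which $c$ fixes; then
\[
c(\Lambda) - \Lambda = \sum_{k=1}^n a_k(\epsilon_{k+1} - \epsilon_k) = \sum_{k=1}^{n-1}(a_n - a_k)\alpha_k, \qquad \ev(c(\Lambda) - \Lambda) = (n-1)a_n - \sum_{k=1}^{n-1} a_k = n a_n - \sum_{k=1}^{n} a_k .
\]
On the other hand, the telescoping identity $\sum_{j=1}^{n-1} j\,\alpha_j = (\epsilon_1 + \cdots + \epsilon_n) - n\epsilon_n$, combined with the orthogonality $(\Lambda, \epsilon_1 + \cdots + \epsilon_n) = 0$, gives $N = -n(\Lambda, \epsilon_n) = \sum_{k=1}^{n} a_k - n a_n$. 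Comparing the two expressions yields $\ev(c(\Lambda)) - \ev(\Lambda) = -N$, which in particular is $\equiv -N \pmod n$.

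None of the steps is deep. The points that need care are (a) fixing the direction of the product $s_1 \cdots s_{n-1}$ so that $c$ is genuinely the $n$-cycle $(1\,2\,\cdots\,n)$ — an off-by-one here would flip the sign of $N$; and (b) the bookkeeping around $\ev$ being a priori defined only on $\rlQ$, which is resolved by the observation $c(\Lambda) - \Lambda \in \rlQ$. After that, part (2) is a routine weight-lattice computation using the formula for $c$ on the $\epsilon_k$ and the orthogonality of $\wlP$ to $\epsilon_1 + \cdots + \epsilon_n$.
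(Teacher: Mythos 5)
Your proof is correct and follows essentially the same route as the paper: both rest on identifying $c=s_1\cdots s_{n-1}$ explicitly as the $n$-cycle, so that $c(\alpha_i)=\alpha_{i+1}$ for $i\le n-2$ and $c(\alpha_{n-1})=-(\alpha_1+\cdots+\alpha_{n-1})$, from which (1) is immediate by linearity of $\ev$. For (2) the paper reduces to fundamental weights and uses $c(\varpi_i)=\varpi_i-\alpha_1-\cdots-\alpha_i$ together with $\left(\varpi_i,\sum_j j\alpha_j\right)=i$, whereas you compute directly in $\epsilon$-coordinates; this is only a cosmetic difference, and both arguments in fact yield the exact equality $\ev(c(\La))-\ev(\La)=-N$ rather than merely the congruence.
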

\begin{proof}
(1) As $\ev$ is linear, it suffices to consider the case where $\beta = \alpha_i $ for $i\in I$. By a direct computation, we can derive that 
$$
c (\alpha_i) =
\left\{
\begin{array}{ll}
\alpha_{i+1}  & \text{ if }  i\ne n-1,\\
-\alpha_1 - \alpha_2 - \cdots - \alpha_{n-1} & \text{ if } i=n-1.
\end{array}
\right.
$$
This tells us that $\ev( c (\alpha_i)) \equiv  \ev(\alpha_i)  \pmod n$.

(2) As above, due to the linearity of $\ev$, we may assume that $\La = \varpi_i$ for $i\in I$. Note that $i = \left( \varpi_i, \alpha_1 + 2 \alpha_2 + \cdots + (n-1) \alpha_{n-1}  \right)  $.
It follows from the identity $ c \varpi_i =  \varpi_i - \alpha_i - \alpha_{i-1} - \cdots - \alpha_1 $ that
$$
 \ev( c(\varpi_i) ) = \ev(\varpi_i) - i,
$$
which justifies the assertion.
\end{proof}

For positive integers $a,b \in \Z_{>0}  $, we denote by $\gcd(a,b)$ the greatest common divisor of $a$ and $b$.
A subset $\{ a_1, a_2, \ldots , a_n \} \subset \Z$ is called a \emph{complete residue system modulo} $n$ if it has no two elements that are congruent modulo $n$.

\begin{lem} \label{Lem: orbit}
Let $\La\in \wlP^+$ and $N = \left( \La, \alpha_1 + 2 \alpha_2 + \cdots + (n-1) \alpha_{n-1}  \right)$.
Suppose that $ \gcd(n,N)=1 $. Then, for any $\xi \in \wt(B(\La))$, the set
 $ \{ \ev( \La- \xi),  \ev(\La-  c(\xi)), \ldots, \ev( \La-  c^{n-1} (\xi))  \} $ is a complete residue system modulo $n$.
\end{lem}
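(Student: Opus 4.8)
The plan is to compute $\ev(\La - c^j(\xi))$ for $j = 0, 1, \ldots, n-1$ modulo $n$ and show that these residues are pairwise distinct, equivalently that they form an arithmetic-progression-like family whose common difference is coprime to $n$. First I would write $\ev(\La - c^j(\xi)) = \ev(c^j(\La)) - \ev(c^j(\xi)) + \bigl(\ev(\La - c^j(\xi)) - \ev(c^j(\La)) + \ev(c^j(\xi))\bigr)$; more cleanly, observe that $\ev(\La - c^j(\xi)) = \ev(c^j(\La) - c^j(\xi)) + \bigl(\ev(\La) - \ev(c^j(\La))\bigr)$ only works if we are careful, so instead the right move is: $\ev(\La - c^j(\xi)) = \ev(\La - \xi) + \bigl(\ev(\La) - \ev(c^j \La)\bigr) - \bigl(\ev(\xi) - \ev(c^j \xi)\bigr) + \ev(c^j\La - c^j\xi) - \ev(\La-\xi) + \cdots$. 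To avoid this circularity, the key observation to isolate is that $\La - \xi \in \rlQ^+$ (since $\xi \in \wt(B(\La))$), so $c^j(\La - \xi) \in \rlQ$, and by Lemma \ref{Lem: cc and weight}(1), $\ev(c^j(\La-\xi)) \equiv \ev(\La - \xi) \pmod n$ for every $j$.

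Thus I would proceed as follows. Fix $\xi \in \wt(B(\La))$ and set $\eta := \La - \xi \in \rlQ^+$. Then for each $j$,
$$
\ev(\La - c^j(\xi)) = \ev(\La - c^j(\La) + c^j(\La - \xi)) = \ev(\La) - \ev(c^j(\La)) + \ev(c^j(\eta)).
$$
By Lemma \ref{Lem: cc and weight}(1), $\ev(c^j(\eta)) \equiv \ev(\eta) = \ev(\La - \xi) \pmod n$. By iterating Lemma \ref{Lem: cc and weight}(2), one gets $\ev(c^j(\La)) \equiv \ev(\La) - jN \pmod n$ — this needs a short induction: $\ev(c^{j}(\La)) = \ev(c(c^{j-1}\La))$, and applying part (2) with $\La$ replaced by $c^{j-1}(\La)$ requires knowing $(c^{j-1}\La, \alpha_1 + 2\alpha_2 + \cdots + (n-1)\alpha_{n-1}) \equiv N \pmod n$, which itself follows from part (1) applied to the difference $c^{j-1}(\La) - \La \in \rlQ$ paired appropriately, or more directly from the fact that pairing with $\alpha_1 + 2\alpha_2 + \cdots$ is, modulo $n$, a class function under $c$ on $\rlQ$-cosets. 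Combining, $\ev(\La - c^j(\xi)) \equiv \ev(\La - \xi) + jN \pmod n$. Since $\gcd(n, N) = 1$, the map $j \mapsto jN \pmod n$ is a bijection on $\Z/n\Z$, so $\{\ev(\La - c^j(\xi)) : 0 \le j \le n-1\}$ is a complete residue system modulo $n$.

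The main obstacle I anticipate is the bookkeeping in the induction establishing $\ev(c^j(\La)) \equiv \ev(\La) - jN \pmod n$: one must verify that the quantity $N = (\La, \alpha_1 + 2\alpha_2 + \cdots + (n-1)\alpha_{n-1})$ is "stable" in the sense that $(c^{k}(\La), \alpha_1 + 2\alpha_2 + \cdots + (n-1)\alpha_{n-1}) \equiv N \pmod n$ for all $k$, so that each application of Lemma \ref{Lem: cc and weight}(2) subtracts the same $N$ modulo $n$. This should follow from a direct computation: $c^{-1}$ sends $\alpha_1 + 2\alpha_2 + \cdots + (n-1)\alpha_{n-1}$ to itself plus a multiple of $n$ times $(\alpha_1 + \cdots + \alpha_{n-1})$ — concretely, using $c(\alpha_i) = \alpha_{i+1}$ for $i \ne n-1$ and $c(\alpha_{n-1}) = -(\alpha_1 + \cdots + \alpha_{n-1})$ from the proof of Lemma \ref{Lem: cc and weight}(1), one checks that $\alpha_1 + 2\alpha_2 + \cdots + (n-1)\alpha_{n-1}$ is fixed by the transpose action of $c$ modulo $n\rlQ^\vee$, hence pairs with every $c^k(\La)$ to give $N \bmod n$. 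Once this stability is in hand, everything else is immediate. Alternatively, and perhaps more cleanly, one can sidestep the induction entirely by noting $\sum_{j=0}^{n-1} \ev(\La - c^j(\xi))$ telescopes and analyzing consecutive differences $\ev(\La - c^{j+1}(\xi)) - \ev(\La - c^j(\xi))$ directly via a single application of Lemma \ref{Lem: cc and weight} to the weight $c^j(\xi) \in \wt(B(\La))$ and the element $\La - c^j(\xi)$, though one then still needs that this difference is $\equiv -N \pmod n$ independently of $j$, bringing back the same stability point.
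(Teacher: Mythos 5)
Your argument is correct and is essentially the paper's own proof: both reduce to showing $\ev(\La - c^j(\xi)) \equiv \ev(\La-\xi) + jN \pmod n$ by splitting off a root-lattice part on which $\ev$ is $c$-invariant mod $n$ (Lemma \ref{Lem: cc and weight}(1)) and telescoping the contribution of $\La$ via Lemma \ref{Lem: cc and weight}(2), then invoking $\gcd(n,N)=1$. The ``stability'' point you flag is handled in the paper exactly by your first suggested route, namely applying part (1) to the root-lattice element $(c-\id)\La$ so that $\ev(c^k(c-\id)\La)\equiv -N \pmod n$ for all $k$, so there is no gap.
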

\begin{proof}
Let $\xi\in \wt( B(\La) )$. Then we can write as $\xi = \La - \beta$ for some $\beta \in \rlQ^+$. Since
\begin{align*}
 c (\La) - \La \in \rlQ \quad \text{ and } \quad  c^k (\beta) \in \rlQ \quad \text{ for } k \in \Z_{\ge0},
\end{align*}
Lemma \ref{Lem: cc and weight} implies that
\begin{align*}
\ev( c^k ( c-\id)\beta) &= \ev( ( c-\id) ( c^k \beta)) \equiv 0 \pmod n,  \text{ and }\\
 \ev( c^k ( c - \id) \La) & \equiv \ev( ( c - \id) \La) \equiv -N \pmod n  .
\end{align*}
Combining these congruences, we derive that 
$$
\ev(  c^k ( c-\id ) \xi ) = \ev(  c^k ( c- \id) (\La-\beta) ) \equiv -N \pmod n,
$$
and thus, for $t = 1, \ldots, n-1$,
$$
\ev( c^t (\xi)) - \ev(\xi) \equiv \ev( ( c^t - \id) (\xi))  \equiv \sum_{k=0}^{t-1} \ev (  c^k ( c - \id) \xi ) \equiv - t \cdot N \pmod n.
$$
Now, our assertion follows from the assumption $\gcd( n, N)=1$.
\end{proof}

For two polynomials $f(q)$ and $g(q)$, we write 
$f(q) \equiv_n g(q)$ if $f(q) - g(q)$ is divisible by $q^n-1$.
We are now ready to state the main result on the cyclic sieving phenomenon for semistandard tableaux.

\begin{thm} \label{Thm: main}
Assume that $\la$ is a Young diagram with $\ell(\la) < n$ and $\gcd( n, |\la| )=1$. Then we have
\begin{enumerate}
\item every orbit of $\SST_n(\la)$ under the action of $\CC$ is free, and 
\item the triple $( \SST_n(\la), \CC, q^{- \kappa(\la)} s_\la(1,q, \ldots, q^{n-1}) )$ exhibits the cyclic sieving phenomenon.
\end{enumerate}
\end{thm}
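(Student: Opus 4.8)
The plan is to deduce both statements from Lemma \ref{Lem: orbit} together with the identification $\SST_n(\la) \simeq B(\wt(\la))$ and the character/$q$-dimension dictionary \eqref{Eq: schur and char}. First I would set $\La = \wt(\la)$ and check that the integer $N = (\La, \alpha_1 + 2\alpha_2 + \cdots + (n-1)\alpha_{n-1})$ appearing in Lemma \ref{Lem: orbit} is congruent to $|\la|$ modulo $n$; in fact, writing $\La = \sum_k \varpi_{\la_k'}$ and using $(\varpi_i, \alpha_1 + 2\alpha_2 + \cdots + (n-1)\alpha_{n-1}) = i$ from the proof of Lemma \ref{Lem: cc and weight}, one gets $N = \sum_k \la_k' = |\la|$ on the nose. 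Hence the hypothesis $\gcd(n, |\la|) = 1$ is exactly $\gcd(n, N) = 1$, and Lemma \ref{Lem: orbit} applies.

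For part (1), I would argue that every $\CC$-orbit is free. Suppose $\cc^t(T) = T$ for some $T \in \SST_n(\la) \simeq B(\La)$ and some $t$ with $0 < t < n$. Applying $\wt$ and using \eqref{Eq: wt and si} gives $c^t(\wt(T)) = \wt(T)$, so with $\xi = \wt(T)$ we would have $\ev(\La - c^t(\xi)) = \ev(\La - \xi)$, contradicting the statement of Lemma \ref{Lem: orbit} that $\{\ev(\La - c^k(\xi))\}_{k=0}^{n-1}$ is a complete residue system modulo $n$ (the values are in particular pairwise distinct, hence pairwise non-congruent, hence pairwise unequal). Since the order of $\cc$ divides $n$, this shows every orbit has size exactly $n$, i.e., is free.

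For part (2), by the remark in the introduction recalling the definition, it suffices to show $q^{-\kappa(\la)} s_\la(1, q, \ldots, q^{n-1}) \equiv_n \sum_{l=0}^{n-1} a_l q^l$ where $a_l$ counts $\CC$-orbits whose stabilizer order divides $l$; since by (1) all orbits are free, $a_l$ simply counts all orbits for every $l$, so the right-hand side is $\frac{|\SST_n(\la)|}{n}(1 + q + \cdots + q^{n-1})$. On the other side, \eqref{Eq: schur and char} gives $q^{-\kappa(\la)} s_\la(1, q, \ldots, q^{n-1}) = \qdim B(\La) = \sum_{\xi \in \wt(B(\La))} |B(\La)_\xi|\, q^{\ev(\La - \xi)}$. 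Now I would partition $\wt(B(\La))$ into $\langle c \rangle$-orbits; since $\wt$ is $\weyl$-equivariant and $|B(\La)_\xi| = |B(\La)_{c(\xi)}|$ (the bijection $\cs_1 \cdots \cs_{n-1}$ restricts to a bijection $B(\La)_\xi \to B(\La)_{c(\xi)}$), each weight-orbit of $c$ on $\wt(B(\La))$ contributes $|B(\La)_\xi|(q^{\ev(\La-\xi)} + q^{\ev(\La - c(\xi))} + \cdots)$, and by Lemma \ref{Lem: orbit} — noting every $c$-orbit on $\wt(B(\La))$ has full size $n$, since the exponents within it are pairwise distinct mod $n$ — this contributes $|B(\La)_\xi|(1 + q + \cdots + q^{n-1})$ modulo $q^n - 1$. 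Summing over orbit representatives and observing that $\sum |B(\La)_\xi|$ over one orbit equals $n|B(\La)_\xi|$, the total is $\frac{|\SST_n(\la)|}{n}(1 + q + \cdots + q^{n-1})$ modulo $q^n - 1$, matching the orbit count.

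The main obstacle I anticipate is purely bookkeeping: one must be careful that $\langle c\rangle$ acting on $\wt(B(\La))$ genuinely has all orbits of size $n$ (this is immediate from Lemma \ref{Lem: orbit}, since $n$ distinct residues mod $n$ force $n$ distinct weights), and that the multiplicity function $\xi \mapsto |B(\La)_\xi|$ is constant along these orbits, which follows from $\cs_i$ being weight-preserving-up-to-$s_i$ bijections; with these two facts the congruence is a direct resummation. No deeper input is needed.
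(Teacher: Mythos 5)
Your proof is correct and follows essentially the same route as the paper: both parts rest entirely on Lemma \ref{Lem: orbit} (after checking $N=|\la|$, which the paper also records at the start of its part (2)), freeness is deduced from the complete-residue-system property, and the congruence is obtained by grouping the $q$-dimension into blocks each contributing $1+q+\cdots+q^{n-1}$ modulo $q^n-1$. The only difference is bookkeeping: the paper resums over $\CC$-orbits in the crystal itself, while you resum over $c$-orbits in the weight set $\wt(B(\La))$ using Weyl-invariance of the weight multiplicities; both are valid and yield the same count $|\SST_n(\la)|/n$ of blocks.
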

\begin{proof}
(1)
Let $\La = \wt(\la)$ and denote by $\OO(\la)$ the set of all orbits of $\SST_n(\la)$ under the action of $\CC$.
Set $$
\oo (\OO) := \{ \ev( \La - \wt( S )) \mid S \in \OO  \}$$
for each orbit $\OO \in \OO(\la)$.
Also, for $T \in \SST_n(\la)$, we set
$\OO (T) := \{ \cc^k (T) \mid k \in \Z_{\ge 0} \} \in \OO(\la).$
Since the cyclic group $\CC$ has order $n$, we can deduce that 
\begin{enumerate}
\item[(i)]  $| \OO(T) | $ divides $ n$, and 
\item[(ii)] $| \oo(\OO(T)) |  \le | \OO(T) |$. 
\end{enumerate}
But, since $ n \le  | \oo( \OO(T)) |$ due to Lemma \ref{Lem: orbit}, we can deduce that 
$$
| \oo( \OO(T)) | = |\OO (T)| = n,
$$
as reqired.

(2)
Note that
\begin{align*}
|\la| = (\wt(\la), \alpha_1 + 2\alpha_2 + \cdots + (n-1)\alpha_{n-1}).
\end{align*}
For $\OO \in \OO(\la)$, we define
$$
\dim_q(\OO) := \sum_{e \in \oo(\OO)} q^e.
$$
As $\gcd(n, |\la|)=1$, Lemma \ref{Lem: orbit} implies that
$$
\dim_q(\OO) \equiv_n q^{n-1} + q^{n-2}+ \cdots + q + 1$$
for any orbit $\OO \in \OO(\la).$
Combining this with the identity $ |\OO(\la)| =  \displaystyle \frac{|\SST_n(\la)|}{n}$, which follows from (1),
we derive that 
\begin{align*}
\dim_q B(\wt(\la)) = \sum_{\OO \in \OO(\la)} \dim_q(\OO) \equiv_n \frac{ \SST_n(\la) }{ n } (  q^{n-1} + q^{n-2}+ \cdots + q + 1 ).
\end{align*}
Now the assertion follows from Equation $\eqref{Eq: schur and char}$.
\end{proof}

\begin{Rmk} \label{non relatively-prime case}
Theorem \ref{Thm: main} does not hold necessarily true without the condition $\gcd(n, |\la|) = 1$.
To see this,  
consider the case where $n=5$,  $\la = (2,1^{3})$ and $\La :=\wt(\la) = \varpi_1 + \varpi_4 $. Then $ \gcd(n, |\la|)=5 \ne 1 $.
Since the crystal $B(\La)$ is the crystal of the adjoint representation of $\mathfrak{sl}_5$, we have
\begin{align*}
\wt(B(\La)) = \{ 0,   \pm ( \epsilon_i - \epsilon_j) \mid 1 \le i< j \le 5  \}, \qquad
|B(\La)_\xi| =
\left\{
\begin{array}{ll}
4  & \text{ if }  \xi = 0,\\
 1 & \text{ if } \xi \ne 0.
\end{array}
\right.
\end{align*}
It follows from the identity
$c(\epsilon_i) =
\left\{
\begin{array}{ll}
\epsilon_{i+1}  & \text{ if }  i \ne  5,\\
 \epsilon_1 & \text{ if } i=5,
\end{array}
\right.
$
that every orbit is free or consists of a singleton. 
One can easily see that 
the number of free orbits equals $4$ and the number of fixed points equals $4$.
By a direct computation, we have
\begin{align*}
q^{-\kappa(\la)} s_\la(1,q, q^2, q^3, q^4) &= 1 + 2q + 3q^2 + 4q^3 + 4q^4 +  4q^5 +  3q^6 + 2q^7 + q^8 \\
&\not\equiv_5  4 + 4(1+q+q^2+q^3+q^4).
\end{align*}
This says that the triple $(B(\La), \CC, q^{-\kappa(\la)} s_\la(1,q, q^2, q^3, q^4))$ does not exhibit the cyclic sieving phenomenon.

However, setting $\sigma(\la) := \sum_{i\in I} \frac{i(i-1)}{2} \la_i$, we can observe that 
\begin{align*}
q^{-\sigma(\la)} s_\la(1,q, q^3, q^6, q^{10}) &= 1+q+q^3+q^4 + q^5 + q^6 + 2q^7 + q^8 + q^9 + 4q^{10} \\
& \ \  + q^{11} + q^{12} + 2q^{13} + q^{14} + q^{15} + q^{16} + q^{17} + q^{19} + q^{20} \\
& \equiv_5 4 + 4(1+q+q^2+q^3+q^4).
\end{align*}
Hence, quite interestingly, the triple $(B(\La), \CC, q^{-\sigma(\la)} s_\la(1,q, q^3, q^6, q^{10}))$ exhibits the cyclic sieving phenomenon.

\end{Rmk}

\vskip 2em

\section{Commuting action with $\cc$} \label{Sec: c and pr}

Recall that $\pr$ is the promotion on $\SST_n(\la)$. For $T\in \SST_n(\la)$, let
$$
\OO_\pr(T) := \{ \pr^k(T) \mid k \in \Z_{\ge 0}  \} \subset \SST_n(\la).
$$

\begin{prop} \label{Prop: order pr n}
Let $\la$ be a Young diagram with $\ell(\la) < n$. Suppose that $\gcd( n, |\la| )=1$. Then we have
\begin{enumerate}
\item for any $T \in \SST_n(\la)$, $| \OO_\pr(T) |$ is divisible by $n$, and 
\item the order of $\pr$ on $\SST_n(\la)$ is divisible by $n$.
\end{enumerate}
\end{prop}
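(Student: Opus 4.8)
The plan is to re-use the weight-theoretic argument from the proof of Theorem~\ref{Thm: main}(1), now with $\pr$ in place of $\cc$. The only feature of promotion I need is the identity $\wt(\pr(T)) = c(\wt(T))$ for $T \in \SST_n(\la)$ recorded just before Lemma~\ref{Lem: cc and weight}, which by iteration gives $\wt(\pr^{k}(T)) = c^{k}(\wt(T))$ for all $k \ge 0$. In particular the (a priori mysterious) order of $\pr$ will never actually enter the proof.

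Concretely, I would put $\La := \wt(\la)$ and $N := |\la|$; as noted in the proof of Theorem~\ref{Thm: main}(2) one has $N = (\La,\, \alpha_1 + 2\alpha_2 + \cdots + (n-1)\alpha_{n-1})$, so the hypothesis becomes $\gcd(n,N)=1$ and Lemma~\ref{Lem: orbit} applies. The key ingredient is the congruence established inside the proof of Lemma~\ref{Lem: orbit}, namely
$$
\ev\!\big(c^{t}(\xi)\big) - \ev(\xi) \;\equiv\; -\,tN \pmod n
\qquad\text{for all } \xi \in \wt(B(\La)) \text{ and } t \in \Z_{\ge 0}.
$$
Now fix $T \in \SST_n(\la)$ and suppose $\pr^{m}(T) = T$ for some $m \in \Z_{>0}$. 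Then $\wt(T) = \wt(\pr^{m}(T)) = c^{m}(\wt(T))$, hence $\ev(c^{m}(\wt(T))) = \ev(\wt(T))$; substituting $\xi = \wt(T)$ and $t = m$ into the displayed congruence gives $0 \equiv -mN \pmod n$, and since $\gcd(n,N) = 1$ I conclude $n \mid m$. Taking $m = |\OO_\pr(T)|$ (the least such positive integer) proves (1); more generally, the set $\{\,k \in \Z : \pr^{k}(T) = T\,\}$ is contained in $n\Z$. For (2), the order $d$ of $\pr$ on $\SST_n(\la)$ satisfies $\pr^{d}(T) = T$ for every $T$, so the same computation yields $n \mid d$ (equivalently, $d = \lcm\{\,|\OO_\pr(T)| : T \in \SST_n(\la)\,\}$ is divisible by $n$ because, by (1), each term is).

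There is essentially no obstacle here; the proof is a two-line consequence of Lemma~\ref{Lem: orbit} once the bookkeeping is arranged. The only point worth attention is that the congruence displayed above is not part of the \emph{statement} of Lemma~\ref{Lem: orbit} but is extracted from its proof, so it is cleanest either to isolate it as a small intermediate observation or to re-derive it in two lines from Lemma~\ref{Lem: cc and weight}(2) before carrying out the divisibility deduction.
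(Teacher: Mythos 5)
Your proposal is correct and follows essentially the same route as the paper: both arguments reduce the claim to the observation that $\wt(\pr^k(T)) = c^k(\wt(T))$ together with the congruence $\ev(c^t(\xi)) \equiv \ev(\xi) - tN \pmod n$ from Lemma~\ref{Lem: orbit}, and then invoke $\gcd(n,N)=1$. The paper packages this by introducing the set $\mathcal{T}$ of exponents $k$ with $\ev(\La-\wt(\pr^k(T))) \equiv \ev(\La-\wt(T)) \pmod n$ and identifying it with $n\Z_{\ge 0}$, which is exactly your ``set of return times is contained in $n\Z$'' statement.
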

\begin{proof}
(1) Let $T \in \SST_n(\la)$ and set
$$
 \mathcal{T}  := \{  k \in \Z_{\ge 0} \mid \ev( \La - \wt( T ) ) \equiv \ev( \La - \wt( \pr^k(T) ) ) \pmod{n}  \}.
$$
Since $\wt( \cc (T))  = \wt( \pr (T) ) = c (\wt(T))$ and $n$ is the order of $c$, by Lemma \ref{Lem: orbit}, we see that 
$$
 \mathcal{T} = \{  kn  \mid k \in \Z_{\ge 0}  \}.
$$
Since $| \OO_\pr(T) | \in \mathcal{T}$ by definition, we have the assertion.

(2) It follows from (1) directly.
\end{proof}

\begin{lem} \label{Lem: pr2}
Let $\la$ be a Young diagram with $\ell(\la) < n$.
Suppose that $\la$ is of hook shape or two-column shape.
Then $ \cs_1 \cdot \pr^2 = \pr^2 \cdot \cs_{n-1}  $.
\end{lem}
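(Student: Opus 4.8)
The plan is to reduce the identity $\cs_1\cdot \pr^2 = \pr^2\cdot\cs_{n-1}$ to a purely local statement about the action of the relevant operators on the entries $1,2$ (for $\cs_1$) and $n-1,n$ (for $\cs_{n-1}$), and then exploit the well-understood combinatorial description of $\pr$ as a composition of Bender--Knuth involutions. Recall that $\pr=\sigma_1\sigma_2\cdots\sigma_{n-1}$, so $\pr^2 = \sigma_1\cdots\sigma_{n-1}\sigma_1\cdots\sigma_{n-1}$; the key elementary fact is that $\sigma_i$ and $\sigma_j$ commute whenever $|i-j|\ge 2$. First I would use these commutations to rewrite $\pr^2$ in a form that isolates the ``boundary'' involutions: conjugating $\cs_{n-1}$ by $\pr^2$ amounts to tracking how the roles of the letters $n-1,n$ are transported by two successive promotions, and one checks that after two promotions these letters have cycled to positions $1,2$ (this is exactly the statement, at the level of weights, that $\wt(\pr^2(T)) = c^2(\wt(T))$ and $c^2$ sends $\epsilon_{n-1}\mapsto\epsilon_1$, $\epsilon_n\mapsto\epsilon_2$). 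So heuristically $\pr^2$ intertwines the $\{n-1,n\}$-structure with the $\{1,2\}$-structure, which is precisely what $\cs_{n-1}$ vs.\ $\cs_1$ records.

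The main technical input is the comparison between the crystal reflection $\cs_i$ and the Bender--Knuth involution $\sigma_i$. In general $\cs_i\ne\sigma_i$, but they agree on the $i$-string structure in a controlled way: both act within each $\{i,i+1\}$-connected component, $\sigma_i$ reversing the row-word content and $\cs_i$ reflecting the $i$-string, and one has $\wt(\cs_i(T))=\wt(\sigma_i(T))=s_i(\wt(T))$. The cleanest route is to restrict everything to the $U_q(\mathfrak{sl}_2)$- or $U_q(\mathfrak{sl}_3)$-subcrystal generated by the letters involved and verify the identity there. This is where the hypothesis that $\la$ is of hook shape or two-column shape enters decisively: for these shapes each $\{i,i+1\}$-component of $\SST_n(\la)$ has a very restricted form (strings of length at most two, or columns of height at most two interacting with a single row), so the local comparison of $\cs_i$ and $\sigma_i$ on these components can be carried out by a finite case analysis rather than in general. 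I would state and prove the needed local lemma — that on these shapes $\cs_i$ coincides with $\sigma_i$, or more precisely that the discrepancy is absorbed by the remaining $\sigma_j$'s in $\pr^2$ — as the heart of the argument.

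Concretely the key steps, in order, are: (i) record the decomposition $\pr=\sigma_1\cdots\sigma_{n-1}$ and the far-commutativity $\sigma_i\sigma_j=\sigma_j\sigma_i$ for $|i-j|\ge2$; (ii) prove the local lemma identifying $\cs_i$ with the appropriate Bender--Knuth action on the $\{i,i+1\}$-components arising in hook or two-column shapes, using the explicit tableau combinatorics of these shapes; (iii) use (i) to slide the boundary involutions past the bulk and rewrite $\pr^2\cdot\cs_{n-1}\cdot\pr^{-2}$ as an operator built from $\cs$'s and $\sigma$'s supported near columns $1,2$; (iv) apply (ii) to convert this into $\cs_1$, completing the proof. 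I expect step (ii) to be the main obstacle: one must show that for these restricted shapes the crystal reflection and the jeu-de-taquin/Bender--Knuth description really do match up on the relevant connected components, and getting the bookkeeping right for the two-column case (where a column of height two straddles two adjacent values) will require care. Once (ii) is in hand, steps (i), (iii), (iv) are formal manipulations with the symmetric-group/Coxeter combinatorics already available from Lemma~\ref{Lem: cc and weight} and the relation $\wt(\pr(T))=c(\wt(T))$.
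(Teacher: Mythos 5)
There is a genuine gap, and it sits exactly where you predicted: step (ii). The ``local lemma'' in its primary form --- that $\cs_i$ coincides with the Bender--Knuth involution $\sigma_i$ on hook or two-column shapes --- is false. Take $n=3$, $\la=(2,1)$ (a hook) and let $T$ be the tableau with first row $1\,2$ and second row $3$. Then $\wt(T)=(1,1,1)$, so $\langle h_2,\wt(T)\rangle=0$ and $\cs_2(T)=T$; but the $2$ in position $(1,2)$ and the $3$ in position $(2,1)$ are both free for $\sigma_2$, so $\sigma_2(T)$ is the tableau with first row $1\,3$ and second row $2$. (This is the standard example behind the failure of the braid relations for the $\sigma_i$, and it lives on a hook.) Your fallback formulation --- ``the discrepancy is absorbed by the remaining $\sigma_j$'s in $\pr^2$'' --- is where all the content of the lemma actually lies, and it is not made precise enough to be checkable. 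Step (iii) is also not the formal manipulation you describe: far-commutativity $\sigma_i\sigma_j=\sigma_j\sigma_i$ for $|i-j|\ge 2$ cannot transport an operator of index $n-1$ through $\pr^2=\sigma_1\cdots\sigma_{n-1}\sigma_1\cdots\sigma_{n-1}$ to one of index $1$, because the adjacent factors neither commute nor braid; and $\cs_{n-1}$ interacts nontrivially with $\sigma_{n-2}$ and $\sigma_{n-1}$. The weight-level identity $\wt(\pr^2\cs_{n-1}T)=\wt(\cs_1\pr^2 T)$, which you correctly note, only pins down the answer up to the (generally nontrivial) weight multiplicities, so it cannot close the argument by itself.

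The paper proves the lemma by brute force instead: it parametrizes $T$ by the multiplicities $x,y$ of $n-1,n$ in the first row and by which of $n-1,n$ occur in the first column (four cases for hooks; for two columns, by the pair of multiplicities $(p,q)$ of $n-1,n$), computes $\pr^2(T)$ and $\pr^2(\cs_{n-1}(T))$ explicitly via jeu de taquin in each case, and observes that the two outputs differ precisely by exchanging the relevant multiplicities of $1$ and $2$, which for the shapes produced is exactly the effect of $\cs_1$. If you want to salvage your structural approach, you would need to formulate and prove a precise statement about how $\cs_{n-1}$ (not $\sigma_{n-1}$) moves through the two trailing segments $\sigma_{n-2}\sigma_{n-1}$ of $\pr^2$ for these shapes; at that point you are doing essentially the same case analysis the paper does, just in a different order.
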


\begin{proof}
To begin with, let us fix necessary notations for the proof.

For $k \in \Z_{>0}$ and $l \in \Z_{\ge 0}$, let ${k}^l := (\underbrace{k, \ldots, k}_{l})$.
For $\mathbf{i} = (i_1, \ldots, i_l ) \in \Z^{l}$, let $\mathbf{i}^{+t} := (i_1+t, \ldots, i_l+t)$, and we simply draw
{ \tiny
 \begin{tabular}{|c c c|}
     \hline
 & $\mathbf{i}$ &   \\
     \hline
   \end{tabular}
   }
(resp.\
{ \tiny
 \begin{tabular}{|c|}
     \hline
   \\
$\mathbf{i}$  \\
  \\
     \hline
   \end{tabular}
   })
for the one-row (resp.\ one-column) tableau with entries $ (i_1, \ldots, i_l )$.

For $T\in \SST_n(\la)$, we write $k \in T$ if $k$ appears in $T$ as an entry.  For $1 \le k \le n$,
we set $T_{\le k}$ to be the tableau obtained from $T$ by removing all boxes with entries in $\{ k+1, \ldots, n \}$.
We also define $T_{< k}$, $T_{\ge k}$ and $T_{> k}$ in a similar manner.

\vskip 1em

\textbf{(Hook shape case)}

We assume that $\la$ is of hook shape, and choose any $T \in \SST_n(\la)$. We denote by $c_1(T)$ (resp.\ $r_1(T)$) the first column (resp.\ the first row) of $T$.
It is obvious that $\cs_1 \cdot \pr^2(T) = \pr^2 \cdot  \cs_{n-1} (T) $ when $\sh(T_{\le n-2}) =\emptyset $. Thus we assume that
$ \sh(T_{\le n-2}) \ne \emptyset $. Let
\begin{align*}
&\text{ $x$ := the number of occurrences of $n-1$ in $r_1(T)$},\\
&\text{ $y$ := the number of  occurrences of $n$ in $r_1(T)$}.
\end{align*}

\textbf{ (Case 1)}\ Suppose that $n-1, n \notin c_1(T)$.
Then we can write $T$ and $\cs_{n-1}(T)$ as follows:
$$
\tiny
T =
 \begin{tabular}{|c|c|c|c|c|}
     \hline
       \multicolumn{1}{|c}{ }  &       \multicolumn{1}{ c}{ $\mathbf{i}$ }  &       & $ {(n-1)}^x$ & $ {n}^y$ \\
     \hline
      \\
      $\mathbf{j}$ \\
        \\
     \cline{1-1}
   \end{tabular}\ ,
   \qquad
   \cs_{n-1} (T) =
 \begin{tabular}{|c|c|c|c|c|}
     \hline
       \multicolumn{1}{|c}{ }  &       \multicolumn{1}{ c}{ $\mathbf{i}$ }  &       & $ {(n-1)}^y$ & $ {n}^x$ \\
     \hline
      \\
      $\mathbf{j}$ \\
        \\
     \cline{1-1}
   \end{tabular}\ .
$$
By a direct computation, we can see that 
$$
\tiny
\pr^2(T) =
 \begin{tabular}{|c|c|c|c|c|}
     \hline
       \multicolumn{1}{|c}{ }  &       \multicolumn{1}{ c}{ $ {1}^x$ }  &       & $ \quad {2}^y \quad $ & $\quad \mathbf{i}^{+2} \quad $  \\
     \hline
      \\
      $\mathbf{j}^{+2}$ \\
        \\
     \cline{1-1}
   \end{tabular}\ ,
   \quad
\pr^2\cdot \cs_{n-1}(T) =
 \begin{tabular}{|c|c|c|c|c|}
     \hline
       \multicolumn{1}{|c}{ }  &       \multicolumn{1}{ c}{ $ {1}^y$ }  &       & $ \quad {2}^x \quad $ & $\quad \mathbf{i}^{+2} \quad $  \\
     \hline
      \\
      $\mathbf{j}^{+2}$ \\
        \\
     \cline{1-1}
   \end{tabular}\ ,
$$
which verifies the assertion since $\cs_1$ exchanges the number of $1$ and $2$.

\textbf{ (Case 2)}\ Suppose that  $n-1 \in c_1(T)$, but $n \notin c_1(T)$.
We first consider the case where $y=0$. Then $T$ and $\cs_1(T)$ can be written as follows:
$$
\tiny
T =
 \begin{tabular}{|c|c|c|c|c|}
     \hline
    &   \multicolumn{1}{c}{ }  &       \multicolumn{1}{ c}{ $\mathbf{i}$ }  &       & $ {(n-1)}^x$  \\
     \cline{2-5}
      $\mathbf{j}$ \\
        \\
     \cline{1-1}
        $n-1$     \\
     \cline{1-1}
   \end{tabular}\ ,
   \qquad
  \cs_{n-1}(T) =
 \begin{tabular}{|c|c|c|c|c|}
     \hline
    &   \multicolumn{1}{c}{ }  &       \multicolumn{1}{ c}{ $\mathbf{i}$ }  &       & $ {n}^x$  \\
     \cline{2-5}
      $\mathbf{j}$ \\
        \\
     \cline{1-1}
        $n$     \\
     \cline{1-1}
   \end{tabular}\ .
$$
Thus we have
$$
\tiny
\pr^2(T) =
 \begin{tabular}{|c|c|c|c|c|}
     \hline
       \multicolumn{1}{|c}{ }  &       \multicolumn{1}{ c}{ $ {1}^{x+1}$ }  &       & $\quad \mathbf{i}^{+2} \quad $  \\
     \hline
      \\
      $\mathbf{j}^{+2}$ \\
        \\
     \cline{1-1}
   \end{tabular}\ ,
   \quad
\pr^2\cdot \cs_{n-1}(T) =
 \begin{tabular}{|c|c|c|c|c|}
     \hline
       \multicolumn{1}{|c}{ }  &       \multicolumn{1}{ c}{ $ {2}^{x+1}$ }  &       & $\quad \mathbf{i}^{+2} \quad $  \\
     \hline
      \\
      $\mathbf{j}^{+2}$ \\
        \\
     \cline{1-1}
   \end{tabular}\ ,
$$
which justifies the assertion as before.

In case where of $y \ne 0$, we can see that 
$$ \tiny
T =
 \begin{tabular}{|c|c|c|c|c|}
     \hline
       \multicolumn{1}{|c}{ }  &       \multicolumn{1}{ c}{ $\mathbf{i}$ }  &       & $ {(n-1)}^x$ & $ {n}^y$ \\
     \hline
      \\
      $\mathbf{j}$ \\
        \\
     \cline{1-1}
       $n-1$  \\
     \cline{1-1}
   \end{tabular}\ ,
   \qquad
\cs_{n-1} (T) =
 \begin{tabular}{|c|c|c|c|c|}
     \hline
       \multicolumn{1}{|c}{ }  &       \multicolumn{1}{ c}{ $\mathbf{i}$ }  &       & $ {(n-1)}^{y-1}$ & $ {n}^{x+1}$ \\
     \hline
      \\
      $\mathbf{j}$ \\
        \\
     \cline{1-1}
       $n-1$  \\
     \cline{1-1}
   \end{tabular}\ ,
$$
and thus 
$$
\tiny
\pr^2(T) =
 \begin{tabular}{|c|c|c|c|c|c|}
     \hline
       \multicolumn{1}{|c}{ }  &       \multicolumn{1}{ c}{ $ {1}^{x+1}$ }  &   &  $2^y$   & $\quad \mathbf{i}^{+2} \quad $  \\
     \hline
      \\
      $\mathbf{j}^{+2}$ \\
        \\
     \cline{1-1}
   \end{tabular}\ ,
   \quad
\pr^2\cdot \cs_{n-1}(T) =
 \begin{tabular}{|c|c|c|c|c|c|}
     \hline
       \multicolumn{1}{|c}{ }  &       \multicolumn{1}{ c}{ $ {1}^{y}$ }  &   & $2^{x+1}$    & $\quad \mathbf{i}^{+2} \quad $  \\
     \hline
      \\
      $\mathbf{j}^{+2}$ \\
        \\
     \cline{1-1}
   \end{tabular}\, 
$$
as required. 

\textbf{ (Case 3)}\ Suppose that  $n-1 \notin c_1(T)$, but $n \in c_1(T)$. Then $T$ is given as follows:
$$
\tiny
T =
 \begin{tabular}{|c|c|c|c|c|c|}
     \hline
    &   \multicolumn{1}{c}{ }  &       \multicolumn{1}{ c}{ $\mathbf{i}$ }  &       & $ {(n-1)}^x$ & $ {n}^y$ \\
     \cline{2-6}
      $\mathbf{j}$ \\
        \\
     \cline{1-1}
        $n$     \\
     \cline{1-1}
   \end{tabular}\ .
$$
If $x=0$, then we have
$$
\tiny
\pr^2(T) =
 \begin{tabular}{|c|c|c|c|c|}
     \hline
       \multicolumn{1}{|c}{ }  &       \multicolumn{1}{ c}{ $ {2}^{y+1}$ }  &       & $\quad \mathbf{i}^{+2} \quad $  \\
     \hline
      \\
      $\mathbf{j}^{+2}$ \\
        \\
     \cline{1-1}
   \end{tabular}\ ,
   \quad
\pr^2\cdot \cs_{n-1}(T) =
 \begin{tabular}{|c|c|c|c|c|}
     \hline
       \multicolumn{1}{|c}{ }  &       \multicolumn{1}{ c}{ $ {1}^{y+1}$ }  &       & $\quad \mathbf{i}^{+2} \quad $  \\
     \hline
      \\
      $\mathbf{j}^{+2}$ \\
        \\
     \cline{1-1}
   \end{tabular}\ ,
$$
as required. 

If $x \ne 0$, then we obtain
$$ \tiny
\cs_{n-1} (T) =
 \begin{tabular}{|c|c|c|c|c|c|}
     \hline
    &   \multicolumn{1}{ c}{ }  &       \multicolumn{1}{ c}{ $\mathbf{i}$ }  &       & $ {(n-1)}^{y+1}$ & $ {n}^{x-1}$ \\
     \cline{2-6}
      $\mathbf{j}$ \\
        \\
     \cline{1-1}
        $n$     \\
     \cline{1-1}
   \end{tabular}\ ,
$$
and thus 
$$
\tiny
\pr^2(T) =
 \begin{tabular}{|c|c|c|c|c|c|}
     \hline
       \multicolumn{1}{|c}{ }  &       \multicolumn{1}{ c}{ $ {1}^{x}$ }  &   &  $2^{y+1}$   & $\quad \mathbf{i}^{+2} \quad $  \\
     \hline
      \\
      $\mathbf{j}^{+2}$ \\
        \\
     \cline{1-1}
   \end{tabular}\ ,
   \quad
\pr^2\cdot \cs_{n-1}(T) =
 \begin{tabular}{|c|c|c|c|c|c|}
     \hline
       \multicolumn{1}{|c}{ }  &       \multicolumn{1}{ c}{ $ {1}^{y+1}$ }  &   & $2^{x}$    & $\quad \mathbf{i}^{+2} \quad $  \\
     \hline
      \\
      $\mathbf{j}^{+2}$ \\
        \\
     \cline{1-1}
   \end{tabular}\ ,
$$
as required.

\textbf{ (Case 4)}\ Suppose that  $n-1, n\in c_1(T)$. Then $T$ and $\cs_{n-1}(T)$ can be written as follows:
$$
\tiny
T =
 \begin{tabular}{|c|c|c|c|c|c|}
     \hline
    &   \multicolumn{1}{c}{ }  &       \multicolumn{1}{ c}{ $\mathbf{i}$ }  &       & $ {(n-1)}^x$ & $ {n}^y$ \\
     \cline{2-6}
      $\mathbf{j}$ \\
        \\
     \cline{1-1}
     $n-1$        \\
     \cline{1-1}
        $n$     \\
     \cline{1-1}
   \end{tabular}\ ,
   \qquad
   \cs_{n-1} (T) =
 \begin{tabular}{|c|c|c|c|c|c|}
     \hline
    &   \multicolumn{1}{c}{ }  &       \multicolumn{1}{ c}{ $\mathbf{i}$ }  &       & $ {(n-1)}^y$ & $ {n}^x$ \\
     \cline{2-6}
      $\mathbf{j}$ \\
        \\
     \cline{1-1}
     $n-1$        \\
     \cline{1-1}
        $n$     \\
     \cline{1-1}
   \end{tabular}\ .
$$
A direct computation yields that
$$
\tiny
\pr^2(T) =
 \begin{tabular}{|c|c|c|c|c|}
     \hline
       \multicolumn{1}{|c}{ }  &       \multicolumn{1}{ c}{ $ {1}^{x+1}$ }  &       & $ \quad {2}^y \quad $ & $\quad \mathbf{i}^{+2} \quad $  \\
     \hline
      2 \\
       \cline{1-1}
       \\
      $\mathbf{j}^{+2}$ \\
        \\
     \cline{1-1}
   \end{tabular}\ ,
   \quad
\pr^2\cdot \cs_{n-1}(T) =
 \begin{tabular}{|c|c|c|c|c|}
     \hline
       \multicolumn{1}{|c}{ }  &       \multicolumn{1}{ c}{ $ {1}^{y+1}$ }  &       & $ \quad {2}^{x} \quad $ & $\quad \mathbf{i}^{+2} \quad $  \\
     \hline
      2 \\
       \cline{1-1}
       \\
      $\mathbf{j}^{+2}$ \\
        \\
     \cline{1-1}
   \end{tabular}\ ,
$$
as required.

\vskip 1em

\textbf{(Two-columns shape case)}

We assume that $\la$ is of two-column shape and $T \in \SST_n(\la)$. Let
\begin{align*}
&\text{ $p$ := the number of occurrences of $n-1$ in $T$,} \\
&\text{ $q$ := the number of occurrences of $n$ in $T$.}
\end{align*}
If $p=q$, then there is nothing to prove since $\cs_{n-1}(T) = T$ and $  \cs_1 \cdot \pr^2(T) =  \pr^2(T)$. 
From now on, suppose that $p \ne q$. Then we have the following cases:
$$
(p,q) \in \{  (2,0), (0,2),  (0,1), (1,0),  (2,1), (1,2) \}.
$$

\textbf{ (Case 1)}\ Suppose that $(p,q) = (2,0)$ or $(0,2)$. Then $T$ can be written as follows:
$$
\tiny
T =
 \begin{tabular}{|c|c|}
     \hline
       &  \\
       $\mathbf{i}$ & $\mathbf{j} $  \\
       &  \\
       \cline{2-2}
        & $a$ \\
       \cline{2-2}
        \\
     \cline{1-1}
        $a$ \\
     \cline{1-1}
   \end{tabular}\ ,
$$
where $a=n-1$ or $n$. Applying $\pr^2$ to $T$, we have
$$
\tiny
\pr^2(T) =
 \begin{tabular}{|c|c|}
     \hline
      $b$ &$b$  \\
     \hline
       &  \\
       $\mathbf{i}^{+2}$ & $\mathbf{j}^{+2} $  \\
       &  \\
       \cline{2-2}
        \\
     \cline{1-1}
   \end{tabular}\ ,
$$
where $b=1$ or $2$, respectively. This shows that $ \cs_1 \cdot \pr^2 (T) = \pr^2 \cdot \cs_{n-1} (T) $.

\textbf{ (Case 2)}\ Suppose that $(p,q) = (0,1)$ or $(1,0)$.
We first consider the case where $(p,q)=(0,1)$. Then we can write $T$ as follows:
$$
\tiny
T =
 \begin{tabular}{|c|c|}
     \hline
       &  \\
       $\mathbf{i}$ & $\mathbf{j} $  \\
       &  \\
       \cline{2-2}
        \\
     \cline{1-1}
        $n$ \\
     \cline{1-1}
   \end{tabular}
\quad
\normalsize
\text{ or}
\quad
\tiny
 \begin{tabular}{|c|c|}
     \hline
       &  \\
       $\mathbf{i}$ & $\mathbf{j} $  \\
       &  \\
       \cline{2-2}
        & $n$ \\
       \cline{2-2}
        \\
     \cline{1-1}
   \end{tabular}\ .
$$
In either case, the equality $T_{< n-1} = (\cs_{n-1}(T))_{< {n-1}}$ holds. Thus, it is easy to see that
$$
(\pr^2(T)_{>2}) = (\pr^2 \cdot \cs_n(T)_{>2}),
\qquad
(\pr^2(T))_{\le 2} =
\tiny
 \begin{tabular}{|c|}
     \hline
       2  \\
\hline
   \end{tabular},
\qquad
(\pr^2 \cdot \cs_{n-1}(T))_{\le 2} =
\tiny
 \begin{tabular}{|c|}
     \hline
       1  \\
\hline
   \end{tabular},
$$
which implies that $ \cs_1 \cdot \pr^2 (T) = \pr^2 \cdot \cs_{n-1} (T) $.

The remaining case where $(p,q)=(1,0)$ can be proved in the same manner.

\textbf{ (Case 3)}\ Suppose that $(p,q) = (2,1)$ or $(1,2)$.
We first consider the case where $(p,q)=(2,1)$. Then $T$ can be written as follows:
$$
\tiny
T =
 \begin{tabular}{|c|c|}
     \hline
       &  \\
       $\mathbf{i}$ & $\mathbf{j} $  \\
       &  \\
       \cline{2-2}
        & $n-1$ \\
       \cline{2-2}
        \\
     \cline{1-1}
        $n-1$ \\
     \cline{1-1}
        $n$ \\
     \cline{1-1}
   \end{tabular}
\quad
\normalsize
\text{ or}
\quad
\tiny
 \begin{tabular}{|c|c|}
     \hline
       &  \\
       $\mathbf{i}$ & $\mathbf{j} $  \\
       &  \\
       \cline{2-2}
        & $n-1$ \\
       \cline{2-2}
        & $n$ \\
       \cline{2-2}
        \\
     \cline{1-1}
        $n-1$ \\
     \cline{1-1}
   \end{tabular}\ .
$$
Then we have
$$
\tiny
\cs_{n-1}(T) =
 \begin{tabular}{|c|c|}
     \hline
       &  \\
       $\mathbf{i}$ & $\mathbf{j} $  \\
       &  \\
       \cline{2-2}
        & $n$ \\
       \cline{2-2}
        \\
     \cline{1-1}
        $n-1$ \\
     \cline{1-1}
        $n$ \\
     \cline{1-1}
   \end{tabular}
\quad
\normalsize
\text{ or}
\quad
\tiny
 \begin{tabular}{|c|c|}
     \hline
       &  \\
       $\mathbf{i}$ & $\mathbf{j} $  \\
       &  \\
       \cline{2-2}
        & $n-1$ \\
       \cline{2-2}
        & $n$ \\
       \cline{2-2}
        \\
     \cline{1-1}
        $n$ \\
     \cline{1-1}
   \end{tabular}\ .
$$
respectively. By the same argument as in \textbf{(Case 1)} and \textbf{(Case 2)}, we have
$$
(\pr^2(T)_{>2}) = (\pr^2 \cdot \cs_n(T)_{>2}),
\qquad
(\pr^2(T))_{\le 2} =
\tiny
 \begin{tabular}{|c|c|}
     \hline
       1 & 1  \\
\hline
      2  \\
\cline{1-1}
   \end{tabular},
\qquad
(\pr^2 \cdot \cs_{n-1}(T))_{\le 2} =
\tiny
 \begin{tabular}{|c|c|}
     \hline
       1 & 2  \\
\hline
      2  \\
\cline{1-1}
   \end{tabular}\ .
$$
Thus, we have that $ \cs_1 \cdot \pr^2 (T) = \pr^2 \cdot \cs_{n-1} (T) $.

The remaining case where $(p,q)=(1,2)$ can be proved in the same manner.
\end{proof}

\begin{Rmk}
It should be remarked that the identity $ \cs_1 \cdot \pr^2 = \pr^2 \cdot \cs_{n-1}  $ is not true in general. Let us consider the case where $n=4$, $\la = (3,2,1)$ and
$$
T =
\tiny
 \begin{tabular}{|c|c|c|}
     \hline
       1 & 1 & 4  \\
\hline
      2 & 3 \\
\cline{1-2}
      3 \\
\cline{1-1}
   \end{tabular}\  \in \SST_4(\la).
$$
Then 
$$
\tiny
\pr^2 \cdot \cs_3  (T) =
 \begin{tabular}{|c|c|c|}
     \hline
       1 & 2 & 3  \\
\hline
      2 & 4 \\
\cline{1-2}
      3 \\
\cline{1-1}
   \end{tabular}\ 
\ne \ 
 \cs_1 \cdot \pr^2   (T) =
 \begin{tabular}{|c|c|c|}
     \hline
       1 & 2 & 3  \\
\hline
      2 & 3 \\
\cline{1-2}
      4 \\
\cline{1-1}
   \end{tabular}\ .
$$
\end{Rmk}

\begin{Rmk}
It should also be remarked that $ \tf_1 \cdot \pr^2 \ne \pr^2 \cdot \tf_{n-1}  $ even in the case of a hook shape or a two-column shape (see \cite[Proof of Proposition 3.2]{BST10}).
For example, we consider the case where $n=3$ and
$T =
\tiny
 \begin{tabular}{|c|c|}
     \hline
       1 & 2  \\
\hline
      2 \\
\cline{1-1}
   \end{tabular}\ .
$
Then it is easy to see that
$$
\tiny
\pr^2\cdot \tf_2 (T) =
 \begin{tabular}{|c|c|}
     \hline
       1 & 3  \\
\hline
      2 \\
\cline{1-1}
   \end{tabular}\
\ne \
 \begin{tabular}{|c|c|}
     \hline
       1 & 2  \\
\hline
      3 \\
\cline{1-1}
   \end{tabular}
= \tf_1\cdot \pr^2( T)).
$$
\end{Rmk}

\begin{lem} \label{Lem: commuting with si}
Let $\la$ be a Young diagram with $\ell(\la) < n$. Suppose that $\la$ is of hook shape or two-column shape.
Then we have
$$
\cs_i \cdot \pr^n = \pr^n \cdot \cs_i \quad \text{ for } i\in I.
$$
In particular, we have
$
 \cc \cdot \pr^n = \pr^n \cdot \cc.
$
\end{lem}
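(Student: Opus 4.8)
The plan is to deduce the general commutation $\cs_i \cdot \pr^n = \pr^n \cdot \cs_i$ from the single identity $\cs_1 \cdot \pr^2 = \pr^2 \cdot \cs_{n-1}$ established in Lemma~\ref{Lem: pr2}, by conjugating with powers of $\pr$. The key observation is that on $\SST_n(\la)$ the promotion operator $\pr$ cyclically permutes the roles of the indices in $I = \{1, \ldots, n-1\}$: precisely, I expect that $\pr^{-1} \cdot \cs_i \cdot \pr = \cs_{i+1}$ for $1 \le i \le n-2$, which is the tableau-theoretic shadow of the fact that $\pr$ intertwines the Bender--Knuth involutions as $\pr = \sigma_1 \sigma_2 \cdots \sigma_{n-1}$ and conjugation by $\pr$ shifts their indices cyclically. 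Granting such a relation, one gets $\pr^{-k} \cdot \cs_1 \cdot \pr^k = \cs_{1+k}$ for $0 \le k \le n-2$, so every $\cs_i$ is $\pr$-conjugate to $\cs_1$.

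First I would make the shift relation precise. Since $\cs_i$ acts only through the entries $\{i, i+1\}$ of a tableau (the $U_q(\mathfrak{sl}_n)$-crystal operators $\te_i, \tf_i$ depend only on the subword of $i$'s and $(i+1)$'s), and $\pr$ increases all entries $< n$ by $1$ while cycling $n \mapsto 1$ via jeu-de-taquin, a direct check on the level of crystal operators gives $\tf_i \cdot \pr = \pr \cdot \tf_{i-1}$ for $2 \le i \le n-1$ — this is standard and is recorded, e.g., in the references to \cite{BST10} already cited in the preceding remark. From the definition \eqref{Eq: def of si} of $\cs_i$ in terms of $\te_i, \tf_i$ together with $\wt(\pr(T)) = c(\wt(T))$ and $\langle h_i, c(\xi)\rangle = \langle h_{i-1}, \xi\rangle$, one obtains $\cs_i \cdot \pr = \pr \cdot \cs_{i-1}$ for $2 \le i \le n-1$; equivalently $\pr^{-1} \cdot \cs_j \cdot \pr = \cs_{j+1}$ for $1 \le j \le n-2$.

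Next, iterating, $\cs_i = \pr^{-(i-1)} \cdot \cs_1 \cdot \pr^{i-1}$ for all $i \in I$. Hence it suffices to show $\cs_1 \cdot \pr^n = \pr^n \cdot \cs_1$, because then for general $i$,
\[
\cs_i \cdot \pr^n = \pr^{-(i-1)} \cdot \cs_1 \cdot \pr^{i-1} \cdot \pr^n = \pr^{-(i-1)} \cdot \cs_1 \cdot \pr^n \cdot \pr^{i-1} = \pr^{-(i-1)} \cdot \pr^n \cdot \cs_1 \cdot \pr^{i-1} = \pr^n \cdot \cs_i.
\]
To get $\cs_1 \cdot \pr^n = \pr^n \cdot \cs_1$, I would rewrite $\pr^n = \pr^{n-2} \cdot \pr^2$ and use Lemma~\ref{Lem: pr2} once: $\cs_1 \cdot \pr^2 = \pr^2 \cdot \cs_{n-1}$, and then feed in the shift relation to move $\cs_{n-1}$ back past $\pr^{n-2}$. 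Indeed $\cs_{n-1} \cdot \pr^{n-2} = \pr \cdot \cs_{n-2} \cdot \pr^{n-3} = \cdots = \pr^{n-2} \cdot \cs_1$ by applying $\cs_j \cdot \pr = \pr \cdot \cs_{j-1}$ repeatedly ($n-2$ times, with indices descending from $n-1$ to $1$). Combining, $\cs_1 \cdot \pr^n = \cs_1 \cdot \pr^2 \cdot \pr^{n-2} = \pr^2 \cdot \cs_{n-1} \cdot \pr^{n-2} = \pr^2 \cdot \pr^{n-2} \cdot \cs_1 = \pr^n \cdot \cs_1$, as desired. Finally, $\cc = \cs_1 \cdots \cs_{n-1}$ commutes with $\pr^n$ since each factor does.

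The main obstacle is establishing the shift relation $\cs_i \cdot \pr = \pr \cdot \cs_{i-1}$ for $2 \le i \le n-1$ cleanly; this hinges on the interplay between the jeu-de-taquin description of $\pr$ and the crystal operators, and the delicate point is that while $\tf_i \cdot \pr = \pr \cdot \tf_{i-1}$ holds for the relevant index range, the analogous relation $\tf_1 \cdot \pr^2 = \pr^2 \cdot \tf_{n-1}$ fails (as the preceding remark emphasizes via \cite[Proof of Proposition 3.2]{BST10}), so one must be careful that the $\cs$-version — which only sees the net effect $s_i$ on weights rather than the individual crystal arrows — does survive. Once the shift relation is in hand, the rest is purely formal manipulation of the braid-like relations, with Lemma~\ref{Lem: pr2} supplying the one genuinely shape-dependent input.
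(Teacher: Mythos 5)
Your overall strategy coincides with the paper's: both derive the shift relation $\cs_{i+1}\cdot\pr=\pr\cdot\cs_i$ ($1\le i\le n-2$) from the intertwining $\tf_{i+1}\cdot\pr=\pr\cdot\tf_i$, $\te_{i+1}\cdot\pr=\pr\cdot\te_i$ of \cite{BST10} together with $\langle h_i,\wt(T)\rangle=\langle h_{i+1},\wt(\pr(T))\rangle$, and then combine it with Lemma~\ref{Lem: pr2} to let $\cs_i$ travel around $\pr^n$; the paper does this in one chain, $\pr^n\cdot\cs_i=\pr^{i-1}\cdot\pr^2\cdot\cs_{n-1}\cdot\pr^{n-i-1}=\pr^{i-1}\cdot\cs_1\cdot\pr^{n-i+1}=\cs_i\cdot\pr^n$, while you first reduce to $i=1$ by conjugation. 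One slip: from $\cs_i\cdot\pr=\pr\cdot\cs_{i-1}$ the correct conjugation identity is $\pr\cdot\cs_j\cdot\pr^{-1}=\cs_{j+1}$, hence $\cs_i=\pr^{\,i-1}\cdot\cs_1\cdot\pr^{-(i-1)}$, not $\pr^{-(i-1)}\cdot\cs_1\cdot\pr^{\,i-1}$ as you wrote; your displayed computation for general $i$ uses the reversed version. The argument survives verbatim with the corrected formula,
\[
\cs_i\cdot\pr^n=\pr^{\,i-1}\cdot\cs_1\cdot\pr^{-(i-1)}\cdot\pr^n=\pr^{\,i-1}\cdot\cs_1\cdot\pr^n\cdot\pr^{-(i-1)}=\pr^{\,i-1}\cdot\pr^n\cdot\cs_1\cdot\pr^{-(i-1)}=\pr^n\cdot\cs_i,
\]
so the proof is correct once this direction is fixed, and your treatment of the base case $\cs_1\cdot\pr^n=\pr^n\cdot\cs_1$ via Lemma~\ref{Lem: pr2} is sound.
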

\begin{proof}
It was shown in \cite[Proposition 3.2]{BST10} that $ \tf_{i+1} \cdot \pr =   \pr \cdot  \tf_{i}$ and $ \te_{i+1} \cdot \pr =   \pr \cdot  \te_{i}$ for $i=1, \ldots, n-2$.
For $T \in \SST_n(\la)$, we have
\begin{align*}
\langle h_i, \wt(T) \rangle = \langle  c^{-1}(h_{i+1}), \wt(T) \rangle = \langle  h_{i+1}, c (\wt(T)) \rangle = \langle  h_{i+1},  \wt(\pr (T)) \rangle.
\end{align*}
Then, it follows from the definition $\eqref{Eq: def of si}$ that
$$
 \cs_{i+1}\cdot \pr  (T) = \pr \cdot \cs_i (T) \qquad  \text{ for }i=1, \ldots, n-2.
$$
By Lemma \ref{Lem: pr2},  we have
\begin{align*}
\pr^n \cdot \cs_i &= \pr^{i-1} \cdot \pr^2 \cdot \pr^{n-i-1} \cdot \cs_i = \pr^{i-1} \cdot \pr^2 \cdot \cs_{n-1} \cdot \pr^{n-i-1}
= \pr^{i-1} \cdot \cs_{1}  \cdot \pr^2 \cdot \pr^{n-i-1} \\
&=  \cs_{i} \cdot \pr^{i-1} \cdot \pr^2 \cdot  \pr^{n-i-1} =  \cs_i\cdot \pr^n,
\end{align*}
which completes the proof.
\end{proof}

In the following, we assume that
\begin{equation} \label{Eq: assumption}
\begin{aligned}
 \gcd(n, |\la|)=1\quad \text{ and } \quad  \cs_i \cdot \pr^n = \pr^n \cdot \cs_i \text{ for } i\in I.
\end{aligned}
\end{equation}
Let $\PP$ be the cyclic group generated by $\pr^n$ acting on $\SST_n(\la)$.
Then the product group $\CC \times \PP$ acts on $\SST_n(\la)$. For $T \in \SST_n(\la)$, we set
\begin{align*}
& \OO_\CC(T) := \{ \cc^a (T) \mid a \in \Z_{\ge0}  \} , \qquad  \OO_\PP(T) := \{ \pr^{b n} (T) \mid b \in \Z_{\ge0}  \}, \\
& \OO_{\CC\times \PP} (T) := \{ \cc^a \pr^{b n} (T) \mid a,b \in \Z_{\ge0}  \}.
\end{align*}
For an $n$-tuple $\alpha \in \Z_{\ge 0}^n$, let $\SST_n(\la, \alpha) := \{ T\in \SST_n(\la) \mid \cont(T) = \alpha  \}$.
We denote by
$\cont(\la)$ the set of all contents of $T$ where $T$ varies over $\SST_n(\la) $, and
 by $\cont^+(\la) $ the set of all $\al = (a_1, \ldots,  a_{n}) \in \cont(\la)$ such that
$ a_1 \ge a_2 \ge \cdots \ge a_{n}$.
Notice that $\SST_n(\la, \alpha)$ is invariant under $\pr^n$ for any $ \alpha \in \cont(\la)$.
For clarity, denote by $\pr^n|_{\alpha}$ the restriction of $\pr^n$ to $\SST_n(\la, \alpha)$.

\begin{thm} \label{Thm: order of pr}
Suppose that $\eqref{Eq: assumption}$ holds. Then the following holds.
\begin{enumerate}
\item For $T\in \SST_n(\la)$, $|\OO_{\pr}(T)| = | \OO_{\CC\times \PP} (T) |$.
\item For an $n$-tuple $\alpha \in \Z_{\ge 0}^n$ with $ \SST_n(\la, \alpha) \ne \emptyset$, let $\mathfrak{o}_{\la}(\alpha)$ be the order of $\pr^n|_{\alpha}$.
Then the order of $\pr$ on $\SST_n(\la)$ equals $n \cdot \lcm\{ \mathfrak{o}_{\la}(\alpha) \mid \alpha \in  \cont^+(\la) \}$,
where $\lcm \{ k_1, \ldots, k_t \}$ denotes the least common multiple of $k_1 ,\ldots, k_t$.
\end{enumerate}
\end{thm}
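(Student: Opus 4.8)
The plan is to analyze the $\CC \times \PP$-action on $\SST_n(\la)$ via its decomposition into content blocks. First I would prove part (1). Fix $T \in \SST_n(\la)$ and let $d = |\OO_\pr(T)|$, the order of $\pr$ on $T$. By Proposition \ref{Prop: order pr n}, $n \mid d$, say $d = n \cdot m$. Since $\pr^n$ generates $\PP$, the orbit $\OO_\PP(T)$ has size exactly $m$. The key observation is that $\OO_{\CC \times \PP}(T) = \{ \cc^a \pr^{bn}(T) \mid 0 \le a < n, \ 0 \le b < m \}$, and since $\wt(\cc^a \pr^{bn}(T)) = c^a(\wt(\pr^{bn}(T)))$ while $\wt(\pr^{bn}(T)) = c^{bn}(\wt(T)) = \wt(T)$ (as $c$ has order $n$), Lemma \ref{Lem: orbit} combined with the hypothesis $\gcd(n,|\la|)=1$ shows that the $n$ elements $\cc^a \pr^{bn}(T)$ for fixed $b$ and $a = 0, \ldots, n-1$ all have distinct values of $\ev(\La - \wt(\cdot))$ modulo $n$, hence are distinct. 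Moreover two elements $\cc^{a}\pr^{bn}(T)$ and $\cc^{a'}\pr^{b'n}(T)$ with the same weight-residue satisfy $a \equiv a' \pmod n$, so $a = a'$; then applying $\cc^{-a}$ gives $\pr^{bn}(T) = \pr^{b'n}(T)$, i.e. $b \equiv b' \pmod m$. Therefore $|\OO_{\CC \times \PP}(T)| = nm = d = |\OO_\pr(T)|$. I would also note here that $\pr$ itself already visits all $nm$ of these elements: $\pr^k(T)$ for $k = 0, \ldots, nm-1$ are distinct, and using $\wt(\pr(T)) = c(\wt(T))$ together with the commutation relation one checks each $\pr^k(T)$ lies in $\OO_{\CC\times\PP}(T)$; this reinterpretation is what makes part (2) work.

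For part (2), I would argue that the order of $\pr$ on all of $\SST_n(\la)$ equals $\mathrm{lcm}\{ |\OO_\pr(T)| \mid T \in \SST_n(\la)\}$, and by part (1) this is $\mathrm{lcm}\{ |\OO_{\CC\times\PP}(T)|\}$. The content map $\cont$ partitions $\SST_n(\la)$ into the blocks $\SST_n(\la,\alpha)$; since $\pr^n$ preserves each block and $\cc$ permutes blocks cyclically by the $c$-action on contents (with all orbits of contents free of size $n$ by Lemma \ref{Lem: orbit}, again using $\gcd(n,|\la|)=1$), every $\CC\times\PP$-orbit meets exactly $n$ content blocks, one of which lies in $\cont^+(\la)$ after a cyclic shift. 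For $T$ with $\cont(T) = \alpha \in \cont^+(\la)$, the argument in part (1) gives $|\OO_{\CC\times\PP}(T)| = n \cdot |\OO_\PP(T)|$, and $|\OO_\PP(T)|$ divides $\mathfrak{o}_\la(\alpha)$; conversely $\mathfrak{o}_\la(\alpha) = \mathrm{lcm}\{|\OO_\PP(T)| \mid \cont(T) = \alpha\}$ since $\pr^n|_\alpha$ is a permutation of the finite set $\SST_n(\la,\alpha)$. Assembling: the order of $\pr$ is $\mathrm{lcm}_{T}\, n|\OO_\PP(T)| = n \cdot \mathrm{lcm}_{\alpha \in \cont^+(\la)}\, \mathrm{lcm}\{|\OO_\PP(T)| : \cont(T)=\alpha\} = n \cdot \mathrm{lcm}\{\mathfrak{o}_\la(\alpha) \mid \alpha \in \cont^+(\la)\}$, where the reduction to $\cont^+(\la)$ is justified because $\cc$ conjugates $\pr^n|_\alpha$ to $\pr^n|_{c(\alpha)}$, so $\mathfrak{o}_\la(\alpha) = \mathfrak{o}_\la(c(\alpha))$ and every content orbit has a representative in $\cont^+(\la)$.

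The main obstacle I anticipate is establishing cleanly that $\OO_\pr(T)$ coincides with $\OO_{\CC\times\PP}(T)$ as sets, not merely that they have the same cardinality — this requires knowing that the single cyclic group generated by $\pr$ already realizes the full $\CC\times\PP$-orbit. The cardinality equality in part (1) is the substitute for this, but to get part (2) I need that $\pr$-orbits and $\CC\times\PP$-orbits on $\SST_n(\la)$ biject in a way compatible with sizes; this follows once part (1) is in hand because both give partitions of the same set into subsets, the $\pr$-orbit through $T$ being contained in the $\CC\times\PP$-orbit through $T$ (by the commutation relation of Lemma \ref{Lem: commuting with si}, every power of $\pr$ is expressible via $\cc$ and $\pr^n$) and having equal size. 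I would carry out these steps in the order: (i) the weight-residue argument showing the $nm$ elements are distinct; (ii) part (1); (iii) the content-block decomposition and behavior of $\cc$ and $\pr^n$ on it; (iv) reduction to $\cont^+(\la)$; (v) the lcm bookkeeping to conclude part (2).
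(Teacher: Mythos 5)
Your part (1) is correct and is in substance the paper's argument: the paper simply writes $|\OO_{\pr}(T)| = |\OO_\CC(T)|\cdot|\OO_\PP(T)| = |\OO_{\CC\times\PP}(T)|$, and your weight-residue argument (via Lemma \ref{Lem: orbit}) showing that $\cc^a\pr^{bn}(T)=\cc^{a'}\pr^{b'n}(T)$ forces $a=a'$ and hence $b\equiv b'$ is exactly the justification the paper leaves implicit. Part (2), however, contains a step that fails as stated.

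You reduce from $\cont(\la)$ to $\cont^+(\la)$ by arguing that $\cc$ conjugates $\pr^n|_\alpha$ to $\pr^n|_{c(\alpha)}$, so $\mathfrak{o}_\la(\alpha)=\mathfrak{o}_\la(c(\alpha))$, and that every content block ``lies in $\cont^+(\la)$ after a cyclic shift.'' That last claim is false: $c$ acts on contents by cyclic rotation, and a content need not have any weakly decreasing cyclic rotation. For example, with $n=5$ and $\la=(3,1,1,1)$ (a hook, so \eqref{Eq: assumption} holds and $\gcd(5,6)=1$), the content $\alpha=(2,1,2,1,0)$ occurs in $\cont(\la)$, and none of its five cyclic shifts is dominant; its dominant rearrangement $(2,2,1,1,0)$ is reached only by a non-cyclic permutation. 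So invariance of $\mathfrak{o}_\la$ under $c$ alone does not convert $\lcm\{\mathfrak{o}_\la(\alpha)\mid\alpha\in\cont(\la)\}$ into $\lcm\{\mathfrak{o}_\la(\alpha)\mid\alpha\in\cont^+(\la)\}$. The repair is exactly what \eqref{Eq: assumption} supplies and what the paper uses: each $\cs_i$ separately commutes with $\pr^n$, so $\mathfrak{o}_\la(\alpha)=\mathfrak{o}_\la(s_i\cdot\alpha)$ for every simple transposition; since these generate $\sg_n$, the function $\mathfrak{o}_\la$ is constant on full $\sg_n$-orbits of contents, and every such orbit does contain a dominant representative.

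Separately, your aside that $\OO_\pr(T)\subseteq\OO_{\CC\times\PP}(T)$ because ``every power of $\pr$ is expressible via $\cc$ and $\pr^n$'' is unjustified and false in general: the relation $\cs_i\pr^n=\pr^n\cs_i$ does not express $\pr$ in terms of $\cc$ and $\pr^n$, and $\pr(T)$ need not lie in $\OO_{\CC\times\PP}(T)$ (only its weight agrees with that of $\cc(T)$). This does not damage your proof, since the order of $\pr$ is computed as the $\lcm$ of the cardinalities $|\OO_\pr(T)|$ and only the cardinality identity of part (1) is used; but the set-theoretic containment should be deleted rather than relied upon.
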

\begin{proof}
(1)
 Since $|\OO_\CC(T)  | = n$ and  the order of $\pr^n$ is given by $\displaystyle \frac{|\OO_{\pr}(T)|}{n}$ by Theorem \ref{Thm: main} together with Proposition \ref{Prop: order pr n}, we deduce that 
$$
|\OO_{\pr}(T)| = n \cdot \frac{|\OO_{\pr}(T)|}{n} = |\OO_\CC(T)  | \cdot | \OO_\PP(T)| = | \OO_{\CC\times \PP} (T) |.
$$

(2) By the assumption $\eqref{Eq: assumption}$, we have  that $\mathfrak{o}_{\la}(\alpha) = \mathfrak{o}_{\la}(\cs_i \cdot \alpha)$ for $i\in I$.
Thus, by (1), we have that 
$$
\text{the order of $\pr$} = n \cdot \lcm\{  \mathfrak{o}_{\la}(\alpha) \mid \al \in \cont(\la)\} = n \cdot \lcm\{ \mathfrak{o}_{\la}(\alpha) \mid \alpha \in  \cont^+(\la) \}.
$$
\end{proof}

\begin{Ex}
We consider the case where $n=6$ and $\la = (2,2,2,1)$. Then $\eqref{Eq: assumption}$ holds by Lemma \ref{Lem: commuting with si} and 
$
\cont^+(\la) = \{ \alpha_1 := (2,2,2,1,0,0),\  \alpha_2 := (2,2,1,1,1,0), \alpha_3 := (2,1,1,1,1,1) \}
$.
As $|\SST_6(\la, \al_1)|=1$, it follows that $\mathfrak{o}_{\la}(\alpha_1) = 1$. In the case of  $\SST_6(\la, \al_2)$, we have
$$
\xymatrix{
{ \tiny
 \begin{tabular}{|c|c|}
     \hline
       1 & 1  \\
\hline
      2 & 2 \\
\hline
      3 & 4 \\
\hline
5 \\
\cline{1-1}
   \end{tabular}
}
\ar@/_1pc/[r]_{\pr^6} & \ar@/_1pc/[l]_{\pr^6}
{ \tiny
 \begin{tabular}{|c|c|}
     \hline
       1 & 1  \\
\hline
      2 & 2 \\
\hline
      3 & 5 \\
\hline
4 \\
\cline{1-1}
   \end{tabular}\ ,
}
}
$$
which tells us that the order $\mathfrak{o}_{\la}(\alpha_2) = 2$.
Finally,  we can see that $\SST_6(\la, \al_3)$ is decomposed into the following two orbits:
$$
\xymatrix{
{ \tiny
 \begin{tabular}{|c|c|}
     \hline
       1 & 1  \\
\hline
      2 & 3 \\
\hline
      4 & 5 \\
\hline
6 \\
\cline{1-1}
   \end{tabular}
}
\ar@/_1pc/[r]_{\pr^6} &
{ \tiny
 \begin{tabular}{|c|c|}
     \hline
       1 & 1  \\
\hline
      2 & 4 \\
\hline
      3 & 6 \\
\hline
5 \\
\cline{1-1}
   \end{tabular}
}\ ,
\ar@/_1pc/[l]_{\pr^6} &
{ \tiny
 \begin{tabular}{|c|c|}
     \hline
       1 & 1  \\
\hline
      2 & 3 \\
\hline
      4 & 6 \\
\hline
5 \\
\cline{1-1}
   \end{tabular}
}
\ar[r]_{\pr^6} &
{ \tiny
 \begin{tabular}{|c|c|}
     \hline
       1 & 1  \\
\hline
      2 & 5 \\
\hline
      3 & 6 \\
\hline
4 \\
\cline{1-1}
   \end{tabular}
}
\ar[r]_{\pr^6} & \ar@/_2.5pc/[ll]_{\pr^6}
{ \tiny
 \begin{tabular}{|c|c|}
     \hline
       1 & 1  \\
\hline
      2 & 4 \\
\hline
      3 & 5 \\
\hline
6 \\
\cline{1-1}
   \end{tabular}
}\ .
}
$$
Thus $\mathfrak{o}_{\la}(\al_3) = 6$, and by Theorem \ref{Thm: order of pr}, the order of $\pr$ is given by $6 \cdot \lcm\{ 1,2,6 \} = 36$.
\end{Ex}

We now focus on the hook shape  $\la = (N-m, 1^m)$. In this case, a closed formula for the order of $\pr$ was given in \cite{BMS14}. 
\begin{thm}[\protect{\cite[Theorem 3.9]{BMS14}}] \label{Thm: order of pr in hook shape}
For a hook shape $\la = (N-m, 1^m)$, 
the order of $\pr$ on $\SST_n(\la)$ is given as follows: 
$$
\begin{cases}
n \ \ &\text{ if } n= m+1, \\
n \cdot \lcm\{ m+1, m+2,  \ldots, \min\{ n, N\} -1  \} \ \ &\text{ if } n > m+1.
\end{cases}
$$
\end{thm}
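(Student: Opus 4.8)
\textbf{Proof proposal for Theorem~\ref{Thm: order of pr in hook shape}.}
The plan is to combine Theorem~\ref{Thm: order of pr} with an explicit analysis of the restricted promotion $\pr^n|_\alpha$ on each content-class $\SST_n(\la,\alpha)$ for a hook $\la = (N-m,1^m)$. By Theorem~\ref{Thm: order of pr} (whose hypothesis~\eqref{Eq: assumption} holds here by Lemma~\ref{Lem: commuting with si} together with the assumption $\gcd(n,|\la|)=\gcd(n,N)=1$), the order of $\pr$ on $\SST_n(\la)$ is $n \cdot \lcm\{ \mathfrak{o}_\la(\alpha) \mid \alpha \in \cont^+(\la)\}$. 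So the entire problem reduces to computing, for each dominant content $\alpha$, the order $\mathfrak{o}_\la(\alpha)$ of $\pr^n$ acting on the fibre $\SST_n(\la,\alpha)$, and then taking the lcm of these numbers and reading off that it equals $\lcm\{m+1,\dots,\min\{n,N\}-1\}$ (interpreting the empty lcm as $1$ when $n=m+1$).

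First I would parametrize $\SST_n(\la,\alpha)$ concretely. A semistandard tableau of hook shape $(N-m,1^m)$ is determined by its first row and first column, which overlap only in the corner box (forced to be $1$). The first column is a strictly increasing sequence $1=c_0 < c_1 < \dots < c_m$ of length $m+1$, and the first row is a weakly increasing sequence of length $N-m$ starting with $1$; semistandardness imposes only that the entry immediately below the corner, $c_1$, is $\ge 2$, i.e.\ the column and row are otherwise independent. Thus a tableau is the data of the column ``arm'' together with a multiset filling the rest of the first row. Under this description, $\pr$ (and hence $\pr^n$) acts in a controlled way: $\pr^n$ fixes the content $\alpha$ and permutes the finitely many tableaux with that content. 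The key combinatorial step is to identify $\pr^n|_\alpha$ with a cyclic rotation on the set of admissible column sets $\{c_1,\dots,c_m\}\subseteq\{2,\dots,n\}$ subject to the constraint coming from which values still have ``room'' in the row — this is exactly the analysis already sketched in the hook-shape case of the proof of Lemma~\ref{Lem: pr2}, extended from $\pr^2$ to $\pr^n$. From the BST commutation relations $\tf_{i+1}\pr = \pr\,\tf_i$ used there, together with the explicit corner-sliding description of $\pr$ on hooks, one deduces that $\pr^n|_\alpha$ is conjugate to a single $n$-cycle's worth of shifts acting on the cyclic word recording the positions of the column entries among $\{1,\dots,n\}$, and its order is the size of the orbit of that word under rotation by the appropriate step.

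Carrying this out, for $\alpha$ with $a_1 \ge \dots \ge a_n \ge 0$ and $\sum a_i = N$, the length of the $\pr^n$-orbit turns out to depend only on how many distinct values $k$ satisfy $a_k \ge 2$ versus $a_k \le 1$, and it ranges over exactly the integers $m+1, m+2, \dots, \min\{n,N\}-1$ as $\alpha$ varies over $\cont^+(\la)$: the value $m+1$ is always attained (e.g.\ by the column-saturated content), and the maximal value $\min\{n,N\}-1$ is attained by the ``spread-out'' content using as many distinct small entries as possible, which requires $\ell(\la)=m+1 \le n$ and total size $N$. Taking the lcm over all these orders gives $\lcm\{m+1,\dots,\min\{n,N\}-1\}$ when $n>m+1$, and when $n=m+1$ there is only the single content $\alpha=(2,\dots,2,1,\dots)$ forced by the staircase-in-column shape, $\SST_n(\la,\alpha)$ is a single $\pr^n$-orbit of size $1$, so $\mathfrak{o}_\la(\alpha)=1$ and the order of $\pr$ is just $n$. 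Multiplying by the overall factor $n$ from Theorem~\ref{Thm: order of pr} yields the claimed formula.

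\textbf{Main obstacle.} The genuinely hard step is the second paragraph: proving that $\pr^n|_\alpha$ really is (conjugate to) the rotation I claim, and in particular pinning down the exact orbit length as a function of $\alpha$. The corner-sliding definition of $\pr$ is awkward to iterate directly, so I expect to need the $\tf_{i+1}\pr = \pr\,\tf_i$ intertwiners (from \cite[Proposition 3.2]{BST10}, already invoked in Lemma~\ref{Lem: commuting with si}) to reduce the action to something manifestly cyclic — essentially tracking a single ``window'' sliding cyclically through the alphabet $\{1,\dots,n\}$ and recording when it crosses column entries. Once $\pr^n|_\alpha$ is identified with such a shift, the orbit-length computation and the subsequent lcm bookkeeping are routine; but getting that identification rigorously, uniformly over all hook shapes and all contents, is where the real work lies, and it is presumably why the authors cite \cite[Theorem 3.9]{BMS14} rather than reprove it here.
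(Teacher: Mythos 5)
There is nothing in the paper to compare your argument against: Theorem~\ref{Thm: order of pr in hook shape} is stated as an imported result, quoted verbatim from \cite[Theorem 3.9]{BMS14}, and the paper gives no proof of it. So your proposal has to stand on its own, and as written it has two genuine problems beyond the gap you yourself flag.

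First, your reduction is not available at the stated level of generality. Theorem~\ref{Thm: order of pr} requires hypothesis~\eqref{Eq: assumption}, and in particular $\gcd(n,|\la|)=1$; that condition also underlies Proposition~\ref{Prop: order pr n} and Theorem~\ref{Thm: main}(1), which is where the factor $n$ and the identity ``order of $\pr$ $= n\cdot\lcm\{\mathfrak{o}_\la(\alpha)\}$'' come from. But Theorem~\ref{Thm: order of pr in hook shape} carries no coprimality assumption on $n$ and $N$ whatsoever. You write the hypothesis in as if it were given (``together with the assumption $\gcd(n,|\la|)=\gcd(n,N)=1$''), so even if everything else were filled in, your argument would only prove the theorem in the coprime case. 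The non-coprime case needs a different (or at least supplementary) argument, which is presumably part of why the result is cited rather than rederived.

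Second, the step you defer is the entire content of the theorem, and the partial claims you make about it are wrong in detail. By the very result you are trying to prove (as quoted in the paper immediately after the theorem statement), $\mathfrak{o}_\la(\alpha)$ depends only on $\nz(\alpha)$, the number of nonzero parts of $\alpha$, and equals $1$ when $\nz(\alpha)=m+1$ and $\nz(\alpha)-1$ when $\nz(\alpha)>m+1$. This contradicts your assertions that the orbit length is governed by how many $a_k$ satisfy $a_k\ge 2$ versus $a_k\le 1$, and that the value $m+1$ is ``attained by the column-saturated content'' (that content has $\nz(\alpha)=m+1$ and gives order $1$, not $m+1$). Likewise, in the case $n=m+1$ it is false that there is a single content: the first column is forced to be $1,2,\dots,n$, but the arm of the row can realize many contents; what is true (and what you actually need) is that each fibre $\SST_n(\la,\alpha)$ is then a singleton, forcing $\mathfrak{o}_\la(\alpha)=1$. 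The final lcm happens to come out the same under your incorrect orbit-length claims, but that is a coincidence of the bookkeeping, not evidence that the key identification of $\pr^n|_\alpha$ with a rotation has been made correctly. Until that identification is proved (for all hooks and all contents, and without the coprimality crutch), this is a plan rather than a proof.
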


Suppose that $\gcd(n, N )=1$.  Let $\alpha = (a_1, \ldots, a_n) \in \Z_{\ge 0}^n$ and let $ \nz(\alpha) $ denote the number of nonzero entries in $\alpha$.
It was proved in \cite{BMS14} that the order of $\pr^n|_{\alpha}$ is given as 
$$
\begin{cases}
1 \ \ &\text{ if } \nz(\alpha)= m+1, \\
\nz(\alpha)-1 &\text{ if } \nz(\alpha) > m+1, 
\end{cases}
$$
and the triple
 $(\SST_n(\la, \alpha), \pr^n|_\al,  X(q) )$ exhibits the cyclic sieving phenomenon, where
$ X(q) = \left[\begin{array}{c}
\nz(\alpha)-1 \\
m
\end{array}
\right]_q$ is the \emph{$q$-binomial coefficient}.
For $\la, \mu \vdash N$,
let $m_\la(x_1, x_2, \ldots, x_n) $ be the \emph{monomial symmetric polynomial} assocoated to $\la$ and
let $K_{\la, \mu}(q)$ be the \emph{Kostka-Foulkes polynomial} associated with $\la$ and $\mu$ (see \cite{M95} for the definitions).
The following lemma is needed for the bicyclic sieving phenomenon on $\SST_n(\la)$, which can be proved straightforwardly. 

\begin{lem} \label{Lem: pullback}
Let $\varphi: \widetilde{C} \rightarrow C$ be a surjective homomorphism between finite cyclic groups. Suppose that the triple $(X, C, f(q))$
exhibits the cyclic sieving phenomenon. We set $d := |\widetilde{C}|/|C|$. Then the triple $(X, \widetilde{C}, f(q^d))$ also exhibits the cyclic sieving phenomenon via the homomorphism $\varphi$.
\end{lem}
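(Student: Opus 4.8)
The plan is to unwind the definition of the cyclic sieving phenomenon and check it directly for the pulled-back data. Let $\widetilde{C} = \langle \tilde{g} \rangle$ and $C = \langle g \rangle$ with $\varphi(\tilde{g}) = g$, so $|\widetilde{C}| = d \cdot |C|$ and the kernel of $\varphi$ is the unique subgroup of $\widetilde{C}$ of order $d$, generated by $\tilde{g}^{|C|}$. The action of $\widetilde{C}$ on $X$ is the one obtained by letting $\tilde{c} \in \widetilde{C}$ act as $\varphi(\tilde{c})$; in particular $\tilde{g}$ acts on $X$ exactly as $g$ does.

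First I would fix an arbitrary element $\tilde{c} = \tilde{g}^{k} \in \widetilde{C}$ and compute both sides of the defining equation $\# X^{\tilde c} = f\bigl(\omega_{o(\tilde c)}^{\,?}\bigr)$ for the triple $(X, \widetilde{C}, f(q^d))$. For the left-hand side: since $\tilde{g}^k$ acts on $X$ as $g^k = \varphi(\tilde{g}^k)$, we have $X^{\tilde{g}^k} = X^{g^k}$, and by hypothesis $\#X^{g^k} = f(\omega_{o(g^k)})$ where $o(g^k)$ is the order of $g^k$ in $C$. For the right-hand side, I need to evaluate $g^{(d)}(\omega_{o(\tilde{g}^k)})$ where $g^{(d)}(q) := f(q^d)$ and $o(\tilde{g}^k)$ is the order of $\tilde{g}^k$ in $\widetilde{C}$. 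So the crux is the arithmetic comparison: if $\omega$ is a primitive $o(\tilde{g}^k)$-th root of unity, then $\omega^d$ is a primitive $\bigl(o(\tilde{g}^k)/\gcd(d, o(\tilde{g}^k))\bigr)$-th root of unity, and I must check that $o(\tilde{g}^k)/\gcd(d, o(\tilde{g}^k)) = o(g^k)$.

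The key step, then, is this order computation, which I would carry out by writing $|C| = N$, so $|\widetilde{C}| = dN$, and using $o(\tilde{g}^k) = dN/\gcd(k, dN)$ and $o(g^k) = N/\gcd(k, N)$ (with the understanding that $g^k$ depends only on $k \bmod N$). One checks that
\begin{equation*}
\frac{o(\tilde{g}^k)}{\gcd\bigl(d, o(\tilde{g}^k)\bigr)} = \frac{dN/\gcd(k,dN)}{\gcd\bigl(d, dN/\gcd(k,dN)\bigr)} = \frac{N}{\gcd(k,N)} = o(g^k),
\end{equation*}
which is an elementary but slightly fiddly gcd manipulation; writing $\gcd(k, dN) = \gcd(k,N) \cdot e$ for a suitable divisor $e$ of $d$ and tracking the factors makes it transparent. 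Once this identity is in hand, I conclude $\# X^{\tilde{c}} = \# X^{\varphi(\tilde c)} = f\bigl(\omega_{o(\varphi(\tilde c))}\bigr) = f\bigl((\omega_{o(\tilde c)})^d\bigr) = f(q^d)\big|_{q = \omega_{o(\tilde c)}}$, which is precisely the CSP condition for $(X, \widetilde{C}, f(q^d))$. Since $\tilde c$ was arbitrary, we are done. The only mild obstacle is the bookkeeping in the gcd identity; there is no conceptual difficulty, which is why the statement is labeled as provable "straightforwardly."
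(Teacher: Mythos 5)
Your proposal is correct. The paper offers no proof of this lemma at all (it is merely declared to be provable ``straightforwardly''), and your direct verification --- identifying $X^{\tilde c}=X^{\varphi(\tilde c)}$ and checking the gcd identity $o(\tilde g^k)/\gcd\bigl(d,o(\tilde g^k)\bigr)=o(g^k)$, which indeed holds since $\gcd(k,dN)\cdot\gcd\bigl(d,dN/\gcd(k,dN)\bigr)=\gcd\bigl(d\gcd(k,dN),dN\bigr)=d\gcd(k,N)$ --- is exactly the intended standard argument.
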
 

We now have the following bicyclic sieving phenomenon.
\begin{thm}  \label{Thm: bi-CSP in hook shape}
Let $\la = (N-m, 1^m)$ with $\gcd(n, N)=1$, and let $\po$ and $\po_\al$ be the orders of $\pr^n$ and $\pr^n|_\al$ respectively.
We set
$$
S_\la(q, t) := q^{-\kappa(\la)} \sum_{\mu \vdash N}  t^{ A_\mu}  K_{\la, \mu}(t^{ \frac{\po}{\po_\mu}}) \cdot m_\mu ( 1,q, q^2, \ldots, q^{n-1}),
$$
where $A_\mu = \frac{\po}{\po_\mu} \left(-\kappa(\mu) + m \cdot \mu_1' - \frac{m(m+1)}{2} \right) $ and
$ \kappa(\mu)$ is defined in $\eqref{Eq: schur and char}$.
Then the triple $  ( \SST_n(\la), \CC\times \PP, S_\la(q, t) ) $ exhibits the bicyclic sieving phenomenon.
\end{thm}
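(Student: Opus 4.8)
The plan is to verify the defining congruence of the bicyclic sieving phenomenon for the action of $\CC \times \PP$ on $\SST_n(\la)$, namely that for all $(c^a, (\pr^n)^b) \in \CC \times \PP$ one has
$$
\# \SST_n(\la)^{(c^a, (\pr^n)^b)} = S_\la(\omega_{o(c^a)}, \zeta_{o((\pr^n)^b)}),
$$
where $\omega, \zeta$ are appropriate roots of unity. I would first reorganize the target polynomial $S_\la(q,t)$ by summing over contents: group the fixed-point count and the polynomial according to the $\CC$-orbit of the content multiset, which under our hypothesis $\gcd(n,N)=1$ behaves well thanks to Lemma~\ref{Lem: orbit} and Theorem~\ref{Thm: main}(1), and according to the $\PP$-action $\pr^n|_\alpha$ on each $\SST_n(\la,\alpha)$. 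The key structural fact is that since $\la$ is a hook, by Lemma~\ref{Lem: commuting with si} the actions of $\CC$ and $\PP$ genuinely commute, so $\SST_n(\la)$ decomposes $\CC\times\PP$-equivariantly, and by Theorem~\ref{Thm: order of pr}(1) the $\pr$-orbit of any $T$ is simultaneously the $\CC\times\PP$-orbit. Hence a $\CC\times\PP$-fixed point count on the whole set factors through a $\CC$-part (governed by the principal specialization / $q$-dimension, exactly as in Theorem~\ref{Thm: main}) and a $\PP$-part on each content class (governed by the $q$-binomial cyclic sieving result of \cite{BMS14} recalled just before the statement).

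Concretely, the steps I would carry out are: (i) establish that $m_\mu(1,q,\dots,q^{n-1})$, multiplied by the normalization $q^{-\kappa(\la)} t^{A_\mu}$, records the joint $(\CC,\PP)$-weight data on the union of content classes whose content-partition is $\mu$ — here $\kappa(\mu)$, $m\mu_1'$, and $\binom{m+1}{2}$ appear precisely to convert between the $q$-dimension normalization of a hook and the $q$-binomial appearing in the $\pr^n|_\alpha$-CSP; (ii) use Lemma~\ref{Lem: pullback} to inflate the CSP for $(\SST_n(\la,\alpha), \langle \pr^n|_\alpha\rangle, X(q))$ along the surjection $\PP \twoheadrightarrow \langle \pr^n|_\alpha\rangle$, with $d = \po/\po_\mu$; this is exactly why $K_{\la,\mu}(t)$ is evaluated at $t^{\po/\po_\mu}$ and why $A_\mu$ carries the factor $\po/\po_\mu$; (iii) invoke the identity $s_\la = \sum_\mu K_{\la,\mu'} \text{(or the appropriate Kostka pairing) } m_\mu$ — more precisely the expansion relating the principal specialization of $s_\la$ to the Kostka–Foulkes polynomials and monomial specializations — to see that $S_\la(q,1)$ collapses to $q^{-\kappa(\la)} s_\la(1,q,\dots,q^{n-1})$, recovering Theorem~\ref{Thm: main} as the $t=1$ slice; (iv) check the mixed evaluations: for a root of unity $\zeta \ne 1$ in the $t$-variable the sum over $\mu$ localizes, via the $\pr^n|_\alpha$-CSP, onto the content classes, while the simultaneous $q$-evaluation at $\omega_{o(c^a)}$ is handled by Lemma~\ref{Lem: orbit} exactly as in the proof of Theorem~\ref{Thm: main}; and (v) assemble these into the single congruence defining the bicyclic sieving phenomenon (cf.\ \cite[Section 9]{S11}).

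The main obstacle I expect is step (iv): correctly matching the two roots of unity. The $\CC$-action and the $\PP$-action are not independent at the level of a single element — Theorem~\ref{Thm: order of pr}(1) says the $\pr$-orbit is the full $\CC\times\PP$-orbit, so there is a "diagonal" relation $\pr = c\cdot(\text{something on contents})$ lurking in the background, and the bicyclic fixed-point formula must be evaluated on pairs $(c^a,(\pr^n)^b)$ whose orders interact. Getting the exponents $A_\mu$ and the argument $t^{\po/\po_\mu}$ to make the normalization consistent across \emph{all} such pairs simultaneously — rather than just at $t=1$ or at $q=1$ — is the delicate bookkeeping. I would handle it by first proving the two "axis" cases ($b=0$, recovering Theorem~\ref{Thm: main}; and $a=0$, using Lemma~\ref{Lem: pullback} plus the $q$-binomial CSP summed against the Kostka–Foulkes expansion of $s_\la$ in the monomial basis), and then showing the general case follows because the $\CC\times\PP$-set is, orbit by orbit, a product of a free $\CC$-set with a cyclic $\PP$-set, so that the fixed-point function is multiplicative and both factors have already been matched.
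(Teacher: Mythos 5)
Your proposal follows essentially the same route as the paper's proof: decompose $\SST_n(\la)$ into the pieces $S(\mu)=\bigcup_{w\in\sg_n}\SST_n(\la,w\cdot\mu)$, use the freeness of the $\CC$-action (Lemma \ref{Lem: orbit}, Theorem \ref{Thm: main}) together with Lemma \ref{Lem: commuting with si} to reduce every mixed fixed-point count to the $a=0$ axis, apply the hook-shape cyclic sieving result of \cite{BMS14} inflated along $\PP\twoheadrightarrow\langle\pr^n|_\mu\rangle$ via Lemma \ref{Lem: pullback}, and convert the $q$-binomial into $K_{\la,\mu}(t)$ by the Kirillov--Reshetikhin identity. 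The delicate point you flag (matching the two roots of unity on mixed pairs) is resolved exactly as you suggest and as the paper does: since the contents $\mu, c\cdot\mu,\ldots,c^{n-1}\cdot\mu$ are pairwise distinct, a composite $\cc^a\pr^{bn}$ has no fixed points unless $a=0$, so the fixed-point function factors and both factors are matched by the two axis cases.
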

\begin{proof}
Let $X$ be a finite set on which a finite group $G$ acts. For $g \in G$, let  $X^g := \{ x \in X  \mid x =g\cdot x   \}$ and let $o(g)$ be the order of $g$.
Note that the symmetric group $\sg_n$ acts on $\Z_{\ge0}^n$ by place permutation, i.e., $s_i \cdot (a_1, \ldots, a_n) = ( a_{s_i(1)}, \ldots, a_{s_i(n)} )$ for $i=1, \ldots, n-1$.

Let $c := s_1 \cdots s_{n-1} \in \sg_n $ and choose any $\mu = (\mu_1, \ldots, \mu_n) \vdash N$ with $\SST_n(\la,\mu) \ne 0$.
Let $ W(\mu) := \{  w \cdot \mu \mid w \in \sg_n   \} $ and set $l := \nz(\mu)$. Note that $\nz(\mu) = \mu_1'$.
It follows from Lemma \ref{Lem: orbit} and Theorem \ref{Thm: main} that
\begin{align} \label{Eq: distinct}
\text{ the elements $\mu$, $c \cdot \mu$, \ldots, $c^{n-1} \cdot \mu$ are all distinct.}
\end{align}

For $w = s_{i_1} \cdots s_{i_l} \in \sg_n$, we set $\cw := \cs_{i_1}\cdots \cs_{i_l}$ and define
$$
\OO_{\sg_n}(T) := \{ \cw \cdot T \mid w\in \sg_n  \} \quad \text{ for } T \in \SST_n(\la, \mu),
$$
and
$$
S(\mu):= \bigcup_{w \in \sg_n} \SST_n(\la, w \cdot \mu).
$$
Note that $\nz(\mu) = \nz(w \cdot \mu)$, $ \cw \cdot \SST_n(\la, \mu) = \SST_n(\la, w \cdot \mu)$ and $ |W(\mu)| = | \OO_{\sg_n}(T)| $ for $T \in \SST_n(\la, \mu)$.
Then the group $\CC\times \PP$ clearly acts on $S(\mu)$. It follows from $\eqref{Eq: distinct}$ together with Lemma \ref{Lem: commuting with si}
that
$$
S(\mu)^{ (\cc^a, \pr^{bn} )} = S(\mu)^{\cc^a} \cap S(\mu)^{\pr^{bn}} \quad \text{ for } 0 \le a < n, \ 0 \le b < \po_\mu.
$$
But, as every $\CC$-orbit of $\SST(\la)$ is free by Theorem \ref{Thm: main}, we can deduce that 
\begin{align} \label{Eq: S}
S(\mu)^{ (\cc^a, \pr^{bn} )} =
\left\{
\begin{array}{ll}
S(\mu)^{\pr^{bn}}  & \text{ if }  a=0,\\
\emptyset & \text{ if } a \ne 0.
\end{array}
\right.
\end{align}

Let
$$
X_\mu(t) = \left[\begin{array}{c}
l-1 \\
m
\end{array}
\right]_t
\quad \text{ and }\quad
Y_\mu(q) = q^{- \kappa(\la)} m_\mu(1,q, \ldots, q^{n-1}).
$$
Note that  the triple $( \SST_n(\la,  \mu), \pr^n|_\mu,  X_\mu(t) )$ exhibits the cyclic sieving phenomenon  by \cite[Theorem 4.3]{BMS14}.
By the same argument as in the proof of Theorem \ref{Thm: main}, for any $T\in \SST_n(\la, \mu)$, one can show that
$( \OO_{\sg_n}(T), \CC,  Y_\mu(q) )$ also 
exhibits the cyclic sieving phenomenon.
For $k \in \Z_{\ge 0}$, let $\omega_k$ be a primitive $k$th root of unity.
By the definition, we have
\begin{equation} \label{Eq: X Y}
\begin{aligned}
&X_\mu(\omega_{ o(\pr^{bn})}) = | \SST_n(\la,  \mu) ^{\pr^{bn}} | , \text{ and }\\
&Y_\mu( \omega_{ o(\cc^{a})} ) = |( \OO_{\sg_n}(T)^{\cc^a} | =  \delta_{a,0} \cdot | \OO_{\sg_n}(T) | = \delta_{a,0} \cdot | W(\mu) |.
\end{aligned}
\end{equation}
Here, the second equality for $ Y_\mu$ follows from the fact that every $\CC$-orbit is free.
Thus, by putting Lemma \ref{Lem: commuting with si},  $\eqref{Eq: S}$ and $\eqref{Eq: X Y}$ together, we can derive that 
\begin{align*}
|S(\mu)^{ (\cc^a, \pr^{bn} )} | = \delta_{a,0} \cdot |S(\mu)^{\pr^{bn}} | =  \delta_{a,0} \cdot |W(\mu)| \cdot | \SST_n(\la,  \mu) ^{\pr^{bn}} |
= X_\mu(\omega_{ o(\pr^{bn})}) \cdot Y_\mu( \omega_{ o(\cc^{a})} ),
\end{align*}
which tells us that the triple $ ( S(\mu), \CC\times \PP, X_\mu(t) \cdot Y_\mu( q ) ) $ exhibits the bicyclic sieving phenomenon.
Since $ \SST_n(\la) = \bigcup_{\mu \vdash N} S(\mu)$, we conclude that
$ ( \SST_n(\la), \CC\times \PP, \sum_{\mu \vdash N}  X_\mu(t^{{\po}/{\po_\mu}}) \cdot Y_\mu( q ) ) $ exhibits the bicyclic sieving phenomenon by Lemma \ref{Lem: pullback}.
Now the assertion
follows from the equality
$
X_\mu(t) =  t^{ -\kappa(\mu) + m \cdot \mu_1' - \frac{m(m+1)}{2}}  K_{\la, \mu}(t)
$
(\cite[Example 4.2]{KR86} or \cite[Lemma 7.12]{Kiri00}).
   \end{proof}

\vskip 2em


\bibliographystyle{amsplain}


\end{document}